\documentclass[a4paper]{amsart}
\usepackage[T1]{fontenc}
\usepackage[french,english]{babel}
\usepackage{newlfont}
\usepackage{amsmath,amssymb,amsfonts,amsbsy,bbm}
\usepackage[all]{xy}

\usepackage{mathtools,braket,mathrsfs}
\usepackage{tikz}
\usepackage{tikz-cd}
\usetikzlibrary{arrows}
\usepackage{enumerate}
\usepackage{extarrows}

\newcommand{\Z}{\ensuremath{\mathbb{Z}}}
\newcommand{\Q}{\ensuremath{\mathbb{Q}}}
\newcommand{\C}{\ensuremath{\mathbb{C}}}
\newcommand{\R}{\ensuremath{\mathbb{R}}}
\newcommand{\Proj}{\ensuremath{\mathbb{P}}}		
\newcommand{\DD}{\ensuremath{\mathbb{D}}}

\newcommand{\too}{\longrightarrow}								%Arrows
\newcommand{\mapstoo}{\longmapsto}
\newcommand{\into}{\hookrightarrow}
\newcommand{\onto}{\twoheadrightarrow}

\DeclareRobustCommand\ontoo{\relbar\joinrel\twoheadrightarrow}

\newcommand{\spe}{\mathrm{sp}}
\newcommand{\ratq}{\mathrm{rq}}
\newcommand{\KM}{\mathrm{KM}}
\newcommand{\cZ}{\mathcal{Z}}
\newcommand{\cC}{\mathcal{C}}
\newcommand{\cO}{\mathcal{O}}
\newcommand{\cP}{\mathcal{P}}
\newcommand{\cB}{\mathcal{B}}
\newcommand{\sD}{\mathscr{D}}

\DeclareMathOperator{\CH}{CH}
\DeclareMathOperator{\HH}{H}

\DeclareMathOperator{\Ker}{ker}
\DeclareMathOperator{\Coker}{coker}
\DeclareMathOperator{\Span}{span}
\DeclareMathOperator{\Tr}{Tr}
\DeclareMathOperator{\Norm}{N}

\DeclareMathOperator{\Ind}{Ind}

\DeclareMathOperator{\Ort}{O}
\DeclareMathOperator{\SO}{SO}

\DeclareMathOperator{\Sym}{Sym}
\DeclareMathOperator{\M}{M}

\renewcommand{\det}{\operatorname{det}}
\renewcommand{\div}{\operatorname{div}}

\DeclareMathOperator{\weight}{wgt}

\def\XXint#1#2#3{{\setbox0=\hbox{$#1{#2#3}{\int}$ }
\vcenter{\hbox{$#2#3$ }}\kern-.585\wd0}}

\theoremstyle{plain}
\newtheorem{theorem}{Theorem}[section]
\newtheorem{lemma}[theorem]{Lemma}
\newtheorem{proposition}[theorem]{Proposition}
\newtheorem{corollary}[theorem]{Corollary}

\theoremstyle{definition}
\newtheorem{definition}[theorem]{Definition}

\newtheorem{question}{Question}

\theoremstyle{remark}
\newtheorem{remark}[theorem]{Remark}
\newtheorem{example}[theorem]{Example}

\makeatletter
\newenvironment{abstracts}{%
  \ifx\maketitle\relax
    \ClassWarning{\@classname}{Abstract should precede
      \protect\maketitle\space in AMS document classes; reported}%
  \fi
  \global\setbox\abstractbox=\vtop \bgroup
    \normalfont\Small
    \list{}{\labelwidth\z@
      \leftmargin3pc \rightmargin\leftmargin
      \listparindent\normalparindent \itemindent\z@
      \parsep\z@ \@plus\p@
      
      \itemsep\medskipamount
    }%
}{%
  \endlist\egroup
  \ifx\@setabstract\relax \@setabstracta \fi
}

\newcommand{\abstractin}[1]{%
  \otherlanguage{#1}%
  \item[\hskip\labelsep\scshape\abstractname.]%
}
\makeatother

\begin{document}

\title{On rational quadratic cocycles}

%\dedicatory{To Henri Darmon, on his 60th birthday}

\author{Lennart Gehrmann}
\address{L.~Gehrmann, Universität Bielefeld, Bielefeld, Germany.}
\email{gehrmann.math@gmail.com}

%\classification{11F75, } 

\begin{abstracts}
\abstractin{english}
Let $(V,q)$ be a non-degenerate $n$-dimensional quadratic space over the rationals of real signature $(r,s)$.
For every integer $1\leq k \leq \min\{r,n-2\}$ we construct classes in the cohomology of arithmetic subgroups of $\Ort(V)$ with values in the group of codimension $k$ cycles on the quadric of isotropic lines in $V$.
Generating series of images of these classes in an equivariant version of the $k$-th Chow group are shown to be Siegel modular forms of genus $k$ in the extremal cases $k=1$ and $k=r$.

\abstractin{french}
Soit $(V,q)$ un espace quadratique non dégénéré de dimension $n$ sur les rationnels, de signature réelle $(r,s)$.
Pour tout entier $1 \leq k \leq \min\{r,n-2\}$, nous construisons des classes dans la cohomologie des sous-groupes arithmétiques de $\Ort(V)$ à valeurs dans le groupe des cycles de codimension $k$ sur la quadrique des droites isotropes dans $V$.
Les séries génératrices des images de ces classes dans une version équivariante du $k$-ième groupe de Chow sont des formes modulaires de Siegel de genre $k$ dans les cas extrémaux $k=1$ et $k=r$.
\end{abstracts}

\maketitle

\tableofcontents

%%%%%%%%%%%%%%%%%%%%%%%%%%%%%%%%%%%%%%%%%%%%%%%%%%%%%%
% Introduction
%%%%%%%%%%%%%%%%%%%%%%%%%%%%%%%%%%%%%%%%%%%%%%%%%%%%%%
\section{Introduction}
Let $(V,q)$ be an $n$-dimensional non-degenerate quadratic space over the field of rational numbers of real signature $(r,s)$. 
Rigid meromorphic cocycles as introduced by Darmon--Vonk in \cite{DV} and later generalized to the orthogonal setting in \cite{DGL} are certain classes in the cohomology of $p$-arithmetic subgroups of the orthogonal group $\Ort(V)$ with values in non-zero meromorphic functions on an attached $p$-adic symmetric space $X_p$.
The $p$-adic symmetric space $X_p$ is an open rigid analytic subspace of the quadric $Q_V$ of isotropic lines in $V$.
The quadric carries a collection of divisors indexed by positive definite $1$-dimensional subspaces of $V$ called rational quadratic divisors.
The defining property of rigid meromorphic cocycles is that their divisors are given by some prescribed cohomology classes with values in the group of locally finite divisors on $X_p$ which are infinite linear combinations of rational quadratic divisors.
Similarly, all known constructions of rigid meromorphic cocycles involve infinite products of rational functions on $Q_V$ whose divisors are rational quadratic.
The aim of this note is to understand divisor-valued cohomology classes as well as their function-valued counterparts prior to taking $p$-adic limits.
Moreover, the theory is developed for rational quadratic cycles of arbitrary codimension.

\smallskip
\noindent \textbf{Overview of the paper.}
We begin by recalling some basic facts about special cycles on quadric hypersurfaces over arbitrary fields of characteristic different from $2$, which are given by quadrics associated with orthogonal complements of non-degenerate subspaces.
We show that their classes in the Chow group depend only on the dimension of the subspace.
As a corollary, we deduce that the subring of the Chow ring generated by special cycles is a truncated polynomial ring.

Starting with Section \ref{sec: equivariant} we consider the case of quadrics over $\Q$.
Here, we focus on rational quadratic cycles, that is, those special cycles which are attached to positive-definite subspaces.
Assuming that the quadratic space $V$ is anisotropic, the cohomology of arithmetic subgroups of $\Ort(V)$ with values in rational quadratic cycles of codimension $k$ is shown to be zero in degree less than $ks$.
Under the further assumption that the arithmetic subgroup $\Gamma$ is neat, we give a concrete description of the cohomology group in degree $ks$ that generalizes the calculation of divisor-valued cohomology groups in \cite{DV} and \cite{gehrmann-quaternionic} for $3$-dimensional quadratic spaces.
Equivariant versions of Chow groups of $Q_V$ together with a weight homomorphism to the cohomology of $\Gamma$ with trivial coefficients are defined.
The kernel of the weight map is shown to be torsion if $k=1$.
Simple considerations with cohomological dimensions show that the weight map is an isomorphism if $k=r$.

Rational quadratic cycles have an Archimedean counterpart, that is, cycles in the locally symmetric space attached to $\Ort(V)$.
We construct explicit $ks$-cocycles of $\Gamma$ with values in codimension $k$ cycles by blending rational quadratic cycles with their Archimedean counterparts, 
Under the weight homomorphism, they are mapped to the cohomology classes studied by Kudla and Millson (see \cite{KM}). 
The cohomology classes are indexed by positive definite $k\times k$-matrices and, thus, their attached generating series look like Fourier series of Siegel modular forms.
The main theorem of \cite{KM} implies that the image of these generating series under the weight homomorphism is in fact modular.
Applying the results of Section \ref{sec: equivariant} yields modularity of the generating in the equivariant Chow group if $k=1$ or $k=r$.

The study of rational quadratic cocycles in this note is far from complete and ,thus, we included several questions in the main body of the text.
There are also several topics that are not discussed at all, for example, the action of the orthogonal Hecke algebra, the connection to the work of Ash \cite{Ash} via logarithmic derivatives, and analogues of the intersection homomorphism for the split orthogonal group $\Ort(2,2)$ studied by Sprehe \cite{Sprehe}.

\smallskip
\noindent \textbf{Acknowledgements.}
It is my pleasure to thank Henri Darmon for introducing me to the wonderful and ever-expanding world of rigid meromorphic cocycles.
I thank Mike Daas, H\aa vard Damm-Johnsen, and Sören Sprehe for several inspiring discussions during our stay at Mathematisches Forschungsinstitut Oberwolfach.
I received funding from Deutsche Forschungsgemeinschaft (DFG, German Research Foundation) via the grant SFB-TRR 358/1 2023 -- 491392403.

%%%%%%%%%%%%%%%%%%%%%%%%%%%%%%%%%%%%%%%%%%%%%%%%%%%%%%%%%%%%%%%%%%%%%%%%
% Chow groups
%%%%%%%%%%%%%%%%%%%%%%%%%%%%%%%%%%%%%%%%%%%%%%%%%%%%%%%%%%%%%%%%%%%%%%%%

\section{Chow groups of quadrics}
In this section we describe special cycles on quadric hypersurfaces and their classes in the Chow ring.
To keep this section as self-contained as possible, we only consider cases relevant to the study of rational quadratic cocycles.

Throughout this section let $F$ be a field of characteristic $\mathrm{char}(F)\neq 2$.
\subsection{Chow groups}
Let us recall the definition and basic properties of Chow groups.
To that end, fix an equidimensional scheme $X/F$ of dimension $n$.
Write $\cZ^p(X)$ for the free abelian group on all closed, irreducible subsets of $X$ of codimension $p$.
Given such a subset $V\subseteq X$ we denote by $[V]$ the corresponding element of $\cZ_p(X)$.
More generally, if $V$ is a closed subscheme of $X$ of pure codimension $k$ we put
\[
[V]\coloneq \sum_{W}\ell(\mathcal{O}_{V,W}) [W] \in \cZ^k(X),
\]
where the sum is over the irreducible components of $V$.
Here $\mathcal{O}_{V,W}$ denotes the local ring of $\mathcal{O}_V$ at $W$ and $\ell(\mathcal{O}_{V,W})$ is its length.

Define $\cC^k(X)$ to be the free abelian group on pairs $(W,f)$, where $W\subset X$ is a closed, integral subvariety of codimension $k-1$ and $f\colon W \to \Proj^1_F$ is a non-zero rational function modulo the relations
\[
(W,f_1 f_2)= (W, f_1) + (W, f_2),
\]
where $f_1, f_2$ are non-zero rational functions on $W$.
Mapping a pair $(W,f)$ to the divisor of $f$ defines a homomorphism
\begin{equation}\label{eq: div}
\div\colon \cC^k(X)\too \cZ^k(X),\quad (W,f)\mapstoo \div(f).
\end{equation}
The \emph{Chow group} of codimension $k$ cycles on $X$ is defined as the cokernel
\[
\CH^k(X)\coloneq \Coker(\div\colon \cC^k(X)\too \cZ^k(X)).
\]
Given an element $\alpha \in \cZ^k(X)$ we write $\{\alpha\}$ for its class in $\CH^k(X)$.
The intersection product (see \cite{Fulton}, Chapter 6) endows $\CH^\ast(X)=\oplus_k \CH^k(X)$ with the structure of a commutative ring.
Its unit is given by the class of $[X]$.

Let $F'/F$ be an extension of fields.
Given a scheme $Y$ over $F$ we write $Y_{F'}$ for the fibre product $Y\times_{\mathrm{Spec}(F)}\mathrm{Spec}(F')$.
If $Y$ is integral, its base change $Y_{F'}$ is equidimensional.
Thus, for every integral scheme $X$, the homomorphism
\[
\cZ^k(X)\too \cZ^k(X_{F'}),\quad [V] \mapstoo [V_{F'}]
\]
is well-defined.
One easily checks that it descends to a homomorphism
\[
\iota_{F'}\colon\CH^k(X)\too \CH^k(X_F').
\]
In case $F'/F$ is a finite extension, the existence of proper push-forward maps on Chow groups yields a norm map in the opposite direction
\[
\Norm_{F'/F}\colon \CH^k(X_F') \too \CH^k(X).
\]
By construction, the composition $\Norm_{F'/F}\circ \iota_{F'}$ is equal to multiplication with the degree $[F' : F]$.
In particular, the kernel of $\iota_{F'}$ is torsion.

Write $|X|$ for the set of closed points of $X$.
For $x\in |X|$ let $\deg(x)$ be the degree of the residue field of $x$ over $F$.
Under the assumption that $X$ is proper, the homomorphism
\[
\cZ^n(X)\too \Z,\quad \sum_{x\in |X|} a_x [x] \mapstoo \sum_{x\in |X|} a_x \deg(x) \
\]
descends to a map
\[
\deg \colon \CH^n(X)\too \Z.
\]
For every extension $F'/F$ the diagram
\[\begin{tikzcd}
	{\CH^n(X)} & {} & \Z \\
	{\CH^n(X_{F'})} && \Z
	\arrow["\deg", from=1-1, to=1-3]
	\arrow[from=1-1, to=2-1]
	\arrow[from=1-3, to=2-3]
	\arrow["\deg", from=2-1, to=2-3]
\end{tikzcd}\]
is commutative.

\begin{example}
Consider the product $X=\Proj^1_F \times \Proj^1_F$ of two projective lines.
By the projective bundle formula (see \cite{Fulton}, Theorem 3.3) the first Chow group $\CH^{1}(X)$ is free of rank $2$ with generators given by the classes of $\Proj^1_F\times\{x\}$ and $\{x\}\times \Proj^1_F$ for any point $x\in \Proj^1(F)$.
Moreover, the degree map induces an isomorphism $\CH^2(X)\to \Z$.
\end{example}

\subsection{Special cycles on smooth quadrics}
Let $(V,q)$ be a finite-dimensional non-degenerate quadratic space over $F$ of dimension $n\geq 2$.
Denote by $\langle \cdot,\cdot \rangle$ the associated bilinear, that is, $q(v)=2\langle v,v\rangle$ for all $v\in V$.
If $F'/F$ is a field extension, we write $V_{F'}=V\otimes_F F'$ for the corresponding quadratic space over $F'$.
Given a subset $A\subseteq V$ let $A^{\perp}\coloneq \{ v\in V\, \vert\, \langle a,v\rangle=0\, \forall a\in A\}$ be its orthogonal complement.
Denote the quadric of isotropic lines in $V$ by
\[
Q_V\coloneq \{\ell\in \Proj(V)\, \vert\, q(\ell)=0\}.
\]
This is a smooth projective variety over $F$ of dimension $n-2$.
It is geometrically connected unless $n=2$, in which case $Q_V$ has two geometric connected components.
The action of the orthogonal group $\Ort(V)$ of $V$ on $Q_V$ induces an action on $\cZ^k(Q_V)$ that descends to an action on Chow groups.

Given a non-degenerate subspace $W\subseteq V$ of dimension $k\leq n-2$, define its associated \emph{special cycle}
\[
\Delta_{W}\coloneq [Q_{W^\perp}]\ \in \cZ^k(Q_V).
\]
Note that for two such subspaces $W,W'\subseteq V$ of dimension less than $n-1$, the cycles $\Delta_{W}$ and $\Delta_{W'}$ agree if and only if $W=W'$.
By definition, we have $g.\Delta_W=\Delta_{gW}$.
Write $\cZ^k(Q_V)_{\spe}\subseteq \cZ^k(Q_V)$ for the subgroup generated by all special cycles of codimension $k$.
It is the free abelian group on the $\Delta_W$, $W\subseteq V$ non-degenerate of dimension $k$.
Moreover, it is an $\Ort(V)$-submodule.
We call the $\Ort(V)$-equivariant homomorphism
\[
\weight\colon \cZ^k(Q_V)_{\spe}\longrightarrow \Z,\quad \sum_W a_W \Delta_W \mapstoo \sum_W {a_W}.
\]
the \emph{weight map}.
Note that in case $\dim(W)=n-2$, we have $\weight_{n-2}{\Delta_W}=2\deg(\Delta_W)$.
Further, \cite{Swan} has proven that the degree map for the top degree Chow group of $Q_V$ is injective.
It is surjective if and only if $Q_V(F)\neq \emptyset$.

Let $W,W'\subseteq V$ be non-degenerate subspaces that are orthogonal to each other.
Suppose that $\dim(W+W')\leq n-2$.
Then
\begin{equation}\label{eq: mult}
\{\Delta_{W}\}\cdot\{\Delta_{W'}\}=\{\Delta_{W+W'}\} \in \CH^{\ast}(Q_V).
\end{equation}
Given a non-degenerate subspace $W\subseteq V$ of dimension $i< n-2$, there exists a non-degenerate subspace $W'\subseteq V$ orthogonal to $W$ such that $\dim(W+W')=n-2$.
Since $\{\Delta_{W+W'}\}$ is non-torsion, \eqref{eq: mult} implies that $\{\Delta_{W}\}$ is non-torsion.
Suppose that $W_1,W_2\subseteq V$ are $1$-dimensional non-degenerate subspaces.
Fix generators $w_1,w_2$ of $W_1$ respectively $W_2$.
Then
\[
f_{w_1,w_2}(x)=\frac{\langle w_1, x \rangle}{\langle w_2, x \rangle}
\]
defines a rational function on $\Proj(V)$ and thus, by restriction, a rational function on the quadric $Q_V$.
By definition, we have
\[
\div(f_{w_1,w_2})=\Delta_{W_1} - \Delta_{W_2}.
\]
Therefore, the class of $\Delta_W$ in the Chow group does not depend on the choice of $1$-dimensional non-degenerate subspace.
By \eqref{eq: mult}, the same is true for the classes of higher-dimensional subspaces.

To summarize, let $\CH^k(Q_V)_{\spe}$ be the image of $\cZ^k(Q_V)_{\spe}$ in $\CH^k(Q_V)$.
We have proven that the weight map descends to an isomorphism
\[
\weight\colon \CH^k(Q_V)_{\spe}\too \Z
\]
of $\Ort(V)$-modules.
Furthermore, $\CH^\ast(Q_V)_{\spe}\coloneq \oplus_k \CH^k(Q_V)_{\spe}$ is a subring of the Chow ring of $Q_V$, which is isomorphic to $\Z[T]/T^{n-1}$.
It follows that for every field extension $F'/F$, the base change map $\CH^\ast(Q_{V})_{\spe}\to \CH^\ast(Q_{V_{F'}})_{\spe}$ is an isomorphism.
 
\begin{example}
Consider the space $V=M_{2}(F)$ of $2 \times 2$-matrices over $F$ together with $q=\det$ and the subspace $W\subseteq V$ of matrices of trace $0$.
An isotropic line $\ell\in Q_{V}$ is generated by a matrix $A$ of rank $1$.
The line is uniquely determined by the kernel and cokernel of $A$.
Thus, the assignment
\begin{equation}\label{iso: quadric}
Q_{V}\xlongrightarrow{\sim} \Proj^1_F \times \Proj^1_F,\quad \Span(A) \mapstoo \Ker(A) \times \Coker(A)
\end{equation}
defines an isomorphism of $F$-varieties,
which yields an identification
\[
\CH^{1}(Q_V)\cong \CH^{1}(\Proj^1_F \times \Proj^1_F)=\Z \times \Z.
\]
Similarly, an isotropic line in $Q_{W}$ is generated by a non-zero nilpotent matrix and the assignment
\[
Q_{W}\xlongrightarrow{\sim} \Proj^1_F ,\quad \Span(A) \mapstoo \Ker(A)
\]
defines an isomorphism of $F$-varieties.
In particular, the cycle $\Delta_{W^\perp}\in \cZ^1(Q_{V})$ is identified with the diagonal in $\Proj^1_F \times \Proj^1_F$.
Thus $\CH^1(Q_V)_{\spe}$ is equal to the diagonal in $\CH^{1}(Q_V)\cong \Z \times \Z$.
\end{example}

Let $\cC^k(Q_V)_{\spe}\subseteq \cC^k(Q_V)$ be the preimage of $\cZ^k(Q_V)_{\spe}$ under the divisor map.
Thus, there is a short exact sequence 
\[
\cC^k(Q_V)_{\spe} \xlongrightarrow{\div}\cZ^k(Q_V)_{\spe} \xlongrightarrow{\weight}\Z\too 0
\]
of $\Ort(V)$-modules.
Further, define $\cC^k(Q_V)_{\spe}^{+}\subseteq \cC^k(Q_V)_{\spe}$ to be the free abelian group on pairs $(Q_W,f)$ with $W\subseteq V$ of dimension $k-1$ and $f\colon \Delta_W\to \Proj^1_F$ such that $\div(f)\in \cZ^k(Q_V)_{\spe}$ modulo the relation
\[
(Q_W,f_1 f_2)=(Q_W,f_1)+(Q_W,f_2).
\]
In case $k=1$, we have $\cC^1(Q_V)_{\spe}^{+}=\cC^1(Q_V)_{\spe}$.

\begin{lemma}\label{lem: aniso}
Suppose that $V$ is anisotropic.
The sequence 
\[
\cC^k(Q_V)_{\spe}^{+} \xlongrightarrow{\div} \cZ^k(Q_V)_{\spe} \xlongrightarrow{\weight}\Z\too 0
\]
of $\Ort(V)$-modules is exact for all $0\leq k \leq n-2$.
\end{lemma}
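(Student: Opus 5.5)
The plan is to check the three parts of exactness separately. Surjectivity of the weight map is immediate. Every subspace of the anisotropic space $V$ is non-degenerate, so for $k\leq n-2$ we can pick any $k$-dimensional $W\subseteq V$, and then $\weight(\Delta_W)=1$. The composite $\weight\circ\div$ vanishes on $\cC^k(Q_V)^+_{\spe}$ because $\cC^k(Q_V)_{\spe}^{+}\subseteq\cC^k(Q_V)_{\spe}$. Indeed, the image of $\div$ consists of special cycles whose class in $\CH^k(Q_V)$ is zero, and we showed above that $\weight$ factors through an isomorphism $\CH^k(Q_V)_{\spe}\cong\Z$. The case $k=0$ is trivial, since $\cZ^0(Q_V)_{\spe}=\Z[Q_V]$ and $\weight$ is an isomorphism. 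So the real content is the inclusion $\Ker(\weight)\subseteq \mathrm{im}(\div)$ for $1\leq k\leq n-2$.

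The kernel of $\weight$ is generated by the differences $\Delta_W-\Delta_{W'}$ with $\dim W=\dim W'=k$. First I reduce to \emph{adjacent} pairs, meaning pairs with $\dim(W\cap W')=k-1$. Any two $k$-dimensional subspaces are joined by a chain $W=W_0,W_1,\dots,W_m=W'$ of $k$-dimensional subspaces in which consecutive members are adjacent. To build it, extend a basis of $W\cap W'$ to bases of $W$ and of $W'$, and replace the vectors of $W$ outside $W\cap W'$ by those of $W'$ one at a time. Since $V$ is anisotropic, all the $W_i$ are non-degenerate, so each $\Delta_{W_i}$ is a special cycle, and $\Delta_W-\Delta_{W'}$ telescopes into a sum of adjacent differences.

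Now take an adjacent pair and set $U=W\cap W'$, which has dimension $k-1$ and is non-degenerate. Because $U$ is non-degenerate, $V=U\oplus U^\perp$, so we can write $W=U\oplus\langle w\rangle$ and $W'=U\oplus\langle w'\rangle$ with $w,w'\in U^\perp$ non-zero (take orthogonal projections onto $U^\perp$). Consider the pair $(\Delta_U,f)$, where $\Delta_U=Q_{U^\perp}$ and
\[
f(x)=\frac{\langle w,x\rangle}{\langle w',x\rangle}.
\]
This is exactly the $k=1$ computation from above, carried out inside the quadratic space $U^\perp$ of dimension $n-k+1\geq 3$. The zero locus of $\langle w,\cdot\rangle$ on $Q_{U^\perp}$ is the quadric $Q_{U^\perp\cap w^\perp}=Q_{W^\perp}$, and similarly for $w'$. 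Hence $\div(f)=\Delta_W-\Delta_{W'}\in\cZ^k(Q_V)_{\spe}$, so $(\Delta_U,f)$ lies in $\cC^k(Q_V)^+_{\spe}$ and realizes the required difference.

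The step I expect to need the most care is the multiplicity claim hidden in ``$\div(f)=\Delta_W-\Delta_{W'}$''. The hyperplane section of $Q_{U^\perp}$ by $w^\perp$ must be taken as a scheme, and it must be reduced with the length $1$ convention for $[\cdot]$. I would argue as follows. The section is the quadric attached to the non-degenerate space $W^\perp$ of dimension $n-k\geq 2$, so it is smooth, hence reduced. If $n-k=2$, it is a single closed point of degree $2$ by anisotropy, which is again reduced. Finally, $Q_{U^\perp}$ is integral when $\dim U^\perp\geq 3$, so $f$ is a legitimate non-zero rational function on it.
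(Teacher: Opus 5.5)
Your proposal is correct and follows the paper's proof: join any two $k$-dimensional subspaces by a chain of adjacent ones, then realize each adjacent difference $\Delta_{W}-\Delta_{W'}$ as the divisor of $\langle w,\cdot\rangle/\langle w',\cdot\rangle$ on $\Delta_{W\cap W'}$. Your choice of $w,w'\in (W\cap W')^\perp$ is what the paper's vectors $w_{i1},w_{i2}$ must mean (as literally written, non-zero vectors of $V_i$ orthogonal to $U_i$ cannot exist). Your additional checks of surjectivity, of $\weight\circ\div=0$, of the explicit chain construction and of the multiplicities fill in details the paper leaves implicit.
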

\begin{proof}
First note that every subspace of $V$ is non-degenerate.
For given subspaces $W_1,W_2\subseteq V$ of dimension $k\leq n-2$, choose a sequence of subspaces
\[
W_1=U_0,U_1,\ldots, U_{t-1},U_t=W_2
\]
such that $\dim(U_i\cap U_{i+1})=k-1$ for every $0\leq i \leq t-1$.
Put $V_i\coloneq U_i\cap U_{i+1}$ and fix non-zero vectors $w_{i1},w_{i2} \in V_i$ orthogonal to $U_i$ respectively $U_{i+1}$.
The function $f_{w_{i1}, w_{i2}}$ is non-constant on the quadric $\Delta_{V_i}$.
The claim follows immediately from the equality
\[
\div(\Delta_{V_i}, f_{w_{i1}, w_{i2}})=\Delta_{U_i}-\Delta_{U_{i+1}}
\]
for all $0\leq i\leq t-1$.
\end{proof}

%%%%%%%%%%%%%%%%%%%%%%%%%%%%%%%%%%%%%%%%%%%%%%%%%%%%%%%%%%%%%%%%%%%%%%%%
% Equivariant
%%%%%%%%%%%%%%%%%%%%%%%%%%%%%%%%%%%%%%%%%%%%%%%%%%%%%%%%%%%%%%%%%%%%%%%%

\section{Cycle-valued cohomology groups}\label{sec: equivariant}
For the rest of this article, we further specialize to the case that $(V,q)$ is a non-degenerate quadratic space over the rationals of real signature $(r,s)$ and dimension $n=r+s\geq 3$.
We will study the cohomology of arithmetic subgroups of $\Ort(V)$ with values in groups of special cycles on $Q_V$ coming from positive definite subspaces.

\subsection{Equivariant cycles}
A special cycle $\Delta_{W}$ is called \emph{rational quadratic} if $W$ is positive-definite.
For every $1\leq k\leq \min\{r,n-2\}$ let $\cZ^k(Q_V)_{\ratq}\subseteq \cZ^k(Q_V)_{\spe}$ be the subgroup generated by all rational quadratic cycles.

Let us remind ourselves that an \emph{arithmetic subgroup} of $\Ort(V)$ is a subgroup that is commensurable to $\Ort(L)$, where $L\subseteq V$ is any even $\Z$-lattice, that is, $q(L)\subseteq \Z$.
A subgroup $\Gamma$ of $\Ort(V)$ is \emph{neat} if for every $\gamma\in\Gamma$ the subgroup of $\cC^\times$ generated by all eigenvalues of $\Gamma$ is torsion-free.
In the following we give some basic results on the cohomology of arithmetic subgroups of $\Ort(V)$ with values in $\cZ^k(Q_V)_{\ratq}$.
The analysis presented here is far from complete.
Its main purpose is to motivate the definition of the group of \emph{$\Gamma$-equivariant rational quadratic cycles} in Definition \ref{def: cycles} below. 

Given an integer $1\leq k \leq \min\{r,n-2\}$, write $\cP_k$ for the set of all $k$-dimensional positive-definite subspaces of $V$.
Then $\cZ^k(Q_V)_{\ratq}$ is just the free abelian group on the set $\cP_k$.
For any arithmetic subgroup $\Gamma\subseteq \Ort(V)$, we may decompose $\cP_K$ into $\Gamma$-orbits and decompose $\cZ^k(Q_V)_{\ratq}$ accordingly:
\begin{equation}\label{eq: VFL}
\cZ^k(Q_V)_{\ratq}= \bigoplus_{\mathcal{O}\in \Gamma\backslash \cP_k} \Z[\mathcal{O}].
\end{equation}
Since arithmetic groups are of type (VFL) (see \cite{BS}, Section 11.1), taking $\Gamma$-cohomology commutes with direct limits (see \cite{Se2}, p.~101).
In particular, the canonical homomorphism
\[
\bigoplus_{\mathcal{O}\in \Gamma\backslash \cP_k} \HH^{i}(\Gamma,\Z[\mathcal{O}])\too \HH^i(\Gamma, \cZ^k(Q_V)_{\ratq})
\]
is an isomorphism for every $i\geq 0$.
Choosing a representative $W$ of a $\Gamma$-orbit $\Gamma W=\mathcal{O}\in \Gamma\backslash \cP_k$ yields an isomorphism
\[
\Z[\mathcal{O}] \cong \Ind_{\Gamma_W}^{\Gamma} \Z
\]
of $\Gamma$-modules, where $\Gamma_W$ denotes the stabilizer of $W$ in $\Gamma$.
Recall that $\Ind_{\Gamma_W}^{\Gamma} \Z$ is simply the space of functions from $\Gamma/\Gamma_W$ to $\Z$ with finite support.
In particular, we see that the $\Gamma$-invariants of $\Ind_{\Gamma_W}^{\Gamma} \Z$ are non-zero if and only if the quotient $\Gamma/\Gamma_W$ is finite.
Reduction theory implies that this is only the case if and only if $V$ is positive definite.
In the positive definite case, the $\Gamma$-invariants of $\Ind_{\Gamma_W}^{\Gamma} \Z$ are simply given by the constant functions.
Thus, we have proven the following:
\begin{proposition}
Let $\Gamma\subseteq \Ort(V)$ be an arithmetic subgroup.
The $0$-th cohomology group $\HH^0(\Gamma,\cZ^k(Q_V)_{\ratq})$ is non-zero
if and only if $V$ is positive definite, in which case there is a canonical isomorphism
\[
\HH^0(\Gamma,\cZ^k(Q_V)_{\ratq}) \cong \bigoplus_{\Gamma \backslash\cP_k} \Z.
\]
\end{proposition}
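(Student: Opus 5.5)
The plan is to follow the decomposition \eqref{eq: VFL} and reduce everything to the individual orbits. Taking $\Gamma$-invariants commutes with arbitrary direct sums of $\Gamma$-modules; this is immediate, since an invariant element of a direct sum has finitely many non-zero components and each of them is invariant. Hence
\[
\HH^0(\Gamma,\cZ^k(Q_V)_{\ratq})=\bigoplus_{\mathcal{O}\in\Gamma\backslash\cP_k}\HH^0(\Gamma,\Z[\mathcal{O}]),
\]
and we do not need the (VFL) argument in degree $0$. For each orbit $\mathcal{O}=\Gamma W$ we identify $\Z[\mathcal{O}]$ with $\Ind_{\Gamma_W}^{\Gamma}\Z$, i.e.\ with finitely supported functions on $\Gamma/\Gamma_W$.

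A $\Gamma$-invariant such function is constant on the transitive $\Gamma$-set $\Gamma/\Gamma_W$. It is therefore non-zero only if $\Gamma/\Gamma_W$ is finite, in which case the invariants are $\Z$, spanned by the indicator function of the whole orbit, i.e.\ $\sum_{W'\in\mathcal{O}}\Delta_{W'}$. Consequently:
\begin{enumerate}[(i)]
\item the invariants vanish if and only if every orbit is infinite;
\item if every orbit is finite, we obtain the canonical isomorphism with $\bigoplus_{\Gamma\backslash\cP_k}\Z$, sending an invariant cycle to its (constant) coefficients on each orbit.
\end{enumerate}
Since $1\le k\le\min\{r,n-2\}$ and $r\geq1$, the set $\cP_k$ is non-empty, so in case (ii) the group is genuinely non-zero.

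The main step is to show that the orbit $\Gamma W$ is finite if and only if $V$ is positive definite. If $V$ is positive definite, then $\Ort(V)(\R)$ is compact, so $\Gamma$ is a discrete subgroup of a compact group and hence finite; all orbits are then finite. For the converse, suppose $s>0$ and assume $[\Gamma:\Gamma_W]<\infty$.

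Write $V=W\oplus W^\perp$. Then $\Gamma_W$ preserves $W^\perp$, so $\Gamma_W$ is commensurable with an arithmetic subgroup of $\Ort(W)\times\Ort(W^\perp)$. Since $W$ is positive definite, $\Ort(W)(\R)$ is compact. Finite index forces $\Gamma_W$ to be arithmetic in $\Ort(V)$ itself, so the real Zariski closure of $\Gamma_W$ would have to be of finite index in $\Ort(V)(\R)$, by Borel density, when $\Ort(V)$ is non-compact. This contradicts $\Gamma_W\subseteq \Ort(W)\times\Ort(W^\perp)$, a proper algebraic subgroup of strictly smaller dimension, since $0<k<n$.

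To be careful, one applies Borel density to $\SO(V)$, which is semisimple for $n\geq3$; in the case of signature $(1,1)$-type anomalies one would instead argue directly. The reduction to $\SO(V)$ is harmless because $\Gamma\cap\SO(V)$ has finite index in $\Gamma$. If $\SO(V)(\R)$ has compact $\Q$-simple factors, one should use the isotropy of $V_\R$: for $n\ge3$ and $s>0$, $\SO(V)$ is $\Q$-simple (or a product of two factors when $n=4$), and each factor is non-compact over $\R$ when $V$ is indefinite. This verification of non-compactness of the factors, needed for Borel density, is the step I expect to require the most care.

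Alternatively, one can argue with reduction theory as the text suggests: $\Gamma_W\backslash D$ has infinite volume while $\Gamma\backslash D$ has finite volume, where $D$ is the symmetric space.
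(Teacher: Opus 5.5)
Your proposal is correct and follows the paper's argument: you decompose along $\Gamma$-orbits, identify $\Z[\mathcal{O}]$ with $\Ind_{\Gamma_W}^{\Gamma}\Z$, and reduce to deciding when $[\Gamma:\Gamma_W]$ is finite, and you rightly note that the (VFL) input is unnecessary in degree $0$. The only difference is that the paper just appeals to reduction theory for the criterion ``$[\Gamma:\Gamma_W]<\infty$ if and only if $V$ is positive definite'', whereas you justify it via Borel density for $\SO(V)$; this works because for indefinite $V$ with $n\geq 3$ every $\Q$-simple factor of $\SO(V)$ is indeed non-compact over $\R$.
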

One may use Bieri--Eckmann duality theory for arithmetic groups to generalize the proposition above to non-positive definite cases.
We will carry this out in the anisotropic case:
\begin{proposition}
Suppose that $V$ is anisotropic.
For every arithmetic subgroup $\Gamma\subseteq \Ort(V)$ we have
\[
\HH^i(\Gamma,\cZ^k(Q_V)_{\ratq}) =0 \quad \forall i< ks.
\]
If $\Gamma$ is furthermore neat, then $\HH^{ks}(\Gamma,\Z[\mathcal{O}])$ is infinite cyclic for all $\mathcal{O}\in \Gamma\backslash \cP_k$ and, thus, \eqref{eq: VFL} induces an isomorphism
\[
\HH^{ks}(\Gamma,\cZ^k(Q_V)_{\ratq})\cong \bigoplus_{\Gamma\backslash \cP_k} \Z.
\]
\end{proposition}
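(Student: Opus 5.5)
Using the decomposition \eqref{eq: VFL} and the fact that $\Gamma$-cohomology commutes with direct sums, we reduce to a single orbit $\mathcal{O}=\Gamma W$ with $W\in\cP_k$. By Shapiro's lemma for induced modules with finite support (compactly induced modules), we need to compute $\HH^i(\Gamma,\Ind_{\Gamma_W}^{\Gamma}\Z)$. Since $\Ind$ is not coinduction when the index is infinite, we cannot apply Shapiro's lemma directly. Instead we use duality. Because $V$ is anisotropic, $\Gamma$ is cocompact in $\Ort(V)(\R)$. After passing to a torsion-free finite-index subgroup, which is harmless for vanishing statements after a transfer argument on the $\Ind$-module level, $\Gamma$ is then a Poincaré duality group of dimension $d=\dim X$, where $X$ is the symmetric space of $\Ort(V)(\R)$ and $d=rs$. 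For neat $\Gamma$ the dualizing module is $\Z$, possibly twisted by the orientation character.

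Next we identify the stabilizer. $\Gamma_W$ is an arithmetic subgroup of $\Ort(W)\times\Ort(W^\perp)$. Here $W$ is positive definite of dimension $k$ and $W^\perp$ has signature $(r-k,s)$ and is anisotropic, so $\Gamma_W$ is cocompact in a group whose symmetric space $X_W$ has dimension $(r-k)s$. For neat $\Gamma$, $\Gamma_W$ is thus a Poincaré duality group of dimension $d_W=(r-k)s$. Poincaré duality for $\Gamma$ gives $\HH^i(\Gamma,\Ind_{\Gamma_W}^\Gamma\Z)\cong \HH_{d-i}(\Gamma,\Ind_{\Gamma_W}^\Gamma\Z\otimes D)$. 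For homology, Shapiro's lemma does hold for $\Ind$, so this equals $\HH_{d-i}(\Gamma_W,D|_{\Gamma_W})$. This group vanishes for $d-i>d_W$, that is, for $i<d-d_W=ks$. In degree $ks$ it equals $\HH_{d_W}(\Gamma_W,D)$, which by Poincaré duality for $\Gamma_W$ is $\HH^0(\Gamma_W,D\otimes D_W)$. This is $\Z$ precisely when the two orientation characters agree on $\Gamma_W$, and otherwise it is $0$ or $\Z/2$.

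The main obstacle is therefore the orientation bookkeeping. We must show that, for neat $\Gamma$, the orientation character of $\Gamma$ on $X$ restricted to $\Gamma_W$ coincides with that of $\Gamma_W$ on $X_W$. Equivalently, $\Gamma_W$ must preserve the orientation of the normal bundle of $X_W\subset X$, which is modelled on $\mathrm{Hom}(W,N)\cong W\otimes N$ with $N$ a maximal negative definite subspace. I would argue as follows. Neat elements have trivial image in the component group $\pi_0(\Ort(W)(\R))$, because $\Ort(W)(\R)$ is compact and a neat element of a compact group is trivial. Hence $\Gamma_W$ acts trivially on $W$, so the action on $W\otimes N$ has determinant $\det(\gamma|_N)^k$. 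The action on the tangent space of $X_W$ has determinant $\det(\gamma|_{N})^{r-k}$, and the action on the tangent space of $X$ has determinant $\det(\gamma|_N)^{r}$. Their ratio is exactly the normal factor, so the twist cancels and $D\otimes D_W$ is trivial on $\Gamma_W$. This gives an infinite cyclic group in degree $ks$.

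For the vanishing statement without neatness, we pass to a neat normal subgroup $\Gamma'$ of finite index. Restriction followed by corestriction is multiplication by $[\Gamma:\Gamma']$, which only shows that the groups are torsion. To get actual vanishing I would run the same duality argument rationally and integrally via Bieri–Eckmann duality for virtual duality groups. Alternatively, I would use the Hochschild–Serre spectral sequence $\HH^p(\Gamma/\Gamma',\HH^q(\Gamma',\cdot))$, which vanishes for $p+q<ks$ because the $\HH^q(\Gamma',\cdot)$ vanish for $q<ks$. This second route is the one I would actually use.
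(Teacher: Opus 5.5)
Your route is the paper's: reduce to one orbit $\Gamma W$, apply duality for $\Gamma$, use Shapiro's lemma in homology for $\Ind_{\Gamma_W}^{\Gamma}\Z$, use vanishing of $\HH_j(\Gamma_W,-)$ for $j>(r-k)s$, apply duality for $\Gamma_W$ in degree $ks$, and use Hochschild--Serre for non-neat $\Gamma$. The Hochschild--Serre route is the right one there, since transfer only gives torsion. The paper simply asserts that $\Gamma$ and $\Gamma_W$ have dualizing module $\Z$. You are right that this hides an orientation issue, but your resolution of it is not valid. Only the vanishing statement for $i<ks$ survives without an extra hypothesis, because it uses only $\HH_j(\Gamma_W,-)=0$ above the cohomological dimension, whatever the twist.

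\textbf{The gap.} It is the sentence ``their ratio is exactly the normal factor, so the twist cancels''. Let $(\epsilon_+(\gamma),\epsilon_-(\gamma))\in\{\pm1\}^2$ be the component of $\gamma$ in $\pi_0(\Ort(V_\R))$.
\begin{itemize}
\item The orientation character of $X_\infty$ is $\epsilon_+^s\epsilon_-^r$.
\item For $\gamma\in\Gamma_W$, which acts trivially on $W$, the orientation character of $X_\infty(W^\perp)$ is $\epsilon_+^s\epsilon_-^{r-k}$.
\item Hence $D\otimes D_W$ restricted to $\Gamma_W$ is $\epsilon_-^k$, the normal orientation character.
\end{itemize}
Your computation identifies this obstruction but gives no reason for it to be trivial. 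Neatness does not force it to be trivial: it kills the image of $\Gamma_W$ in $\Ort(W)$, but not its image in $\pi_0(\Ort(W^\perp_\R))$.

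\textbf{A counterexample.} Take $(r,s)=(2,1)$ and $k=1$. Let $V$ be the trace-zero part of an indefinite quaternion division algebra $B$ over $\Q$ with $q=-\mathrm{nrd}$. Let $\Lambda\subseteq B$ be a maximal order with $5\nmid\mathrm{disc}(B)$, and let $\Gamma$ be the subgroup of $\Ort(\Lambda\cap V)$ acting trivially modulo $5$; this group is neat.
\begin{itemize}
\item Eichler's norm theorem and strong approximation for $B^1$ give $u\in\Lambda^\times$ with $\mathrm{nrd}(u)=-1$ and $u\equiv 2 \bmod 5\Lambda$.
\item Conjugation by $u$ lies in $\Gamma$. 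Its eigenvalues are $1,\mu,\mu^{-1}$ with $\mu<0$ and $\mu\neq-1$, so it is neat.
\item It fixes the positive line $W$ spanned by $2u-\mathrm{tr}(u)$ and acts on the hyperbolic plane $X_\infty$ as a glide reflection along $\Delta_{W,\infty}$. Thus $\epsilon_-(u)=-1$.
\end{itemize}
Consequently $\HH^1(\Gamma,\Z[\Gamma W])\cong\HH_1(\Gamma_W,\Z_{\epsilon_-})=0$, not $\Z$. Geometrically, this is $\HH^1_c$ of an open M\"obius band, which vanishes.

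So the infinite-cyclic claim needs an orientation hypothesis, for example $\Gamma\subseteq\SO(V)^+$, which is the hypothesis used later for the cocycles $c^{\ratq}_{\cO}$. This forces $\epsilon_\pm=1$ on $\Gamma$, both dualizing modules become trivial, and your argument then coincides with the paper's. A minor correction: $\HH^0(\Gamma_W,\Z_\chi)$ for a non-trivial sign character $\chi$ is $0$, never $\Z/2$.
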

\begin{proof}
The statement for general arithmetic subgroups follows from the fact that every arithmetic subgroup has a normal, neat subgroup of finite index and the Hochschild--Serre spectral sequence.
Let us assume that $\Gamma$ is neat.
Since $V$ is anisotropic, $\Gamma$ is a duality group of cohomological dimension $rs$ with duality module $\Z$, that is, there are canonical isomorphisms
\[
\HH^i(\Gamma,M)\cong\HH_{rs-i}(\Gamma,M)
\]
for every $\Gamma$-module $M$.
Applying the duality isomorphism to $\Ind_{\Gamma_W}^{\Gamma}\Z$ yields
\[
\HH^i(\Gamma,\Ind_{\Gamma_W}^{\Gamma}\Z)\cong \HH_{rs-i}(\Gamma,\Ind_{\Gamma_W}^{\Gamma}\Z)\cong \HH_{rs-i}(\Gamma_W,\Z),
\]
where the second isomorphism follows from Shapiro's Lemma.
Since $\Gamma_W$ is a neat arithmetic subgroup of $\Ort(W)\times \Ort(W^\perp)$ with $W$ positive definite, it is a duality group of cohomological dimension $(r-k)s$ with duality module $\Z$.
It follows that
\[
\HH_{rs-i}(\Gamma_W,\Z)= 0\quad \forall i < ks
\]
and furthermore that
\[
\HH_{rs-ks}(\Gamma_W,\Z)\cong \HH^{0}(\Gamma_W,\Z)\cong \Z,
\]
which proves the claim.
\end{proof}

\begin{remark}
For general $V$, let $r_V$ be the dimension of a maximal isotropic subspace of $V$.
The integer $r_V$ is the rank of the orthogonal group $\Ort(V)$.
Therefore, by \cite{BS}, every neat arithmetic subgroup of $\Ort(V)$ is a duality group of cohomological dimension $rs-r_V$ and some duality module, which has infinite rank if $V$ is not anisotropic.
Note that $r_V\leq \min\{r,s\}$.
Moreover, it follows from Meyer's theorem that $r_V= s$ if $r\geq s+4$.
Arguing as in the proof of the proposition above thus yields that
\[
\HH^i(\Gamma,\cZ^k(Q_V)_{\ratq}) =0 \quad \forall i \leq ks
\]
under the assumption that $(r-k)\geq s+4$.
\end{remark}

\begin{question}
Let $\Gamma\subseteq \Ort(V)$ be an arithmetic subgroup.
Is $\HH^i(\Gamma,\cZ^k(Q_V)_{\ratq})$ zero or at least torsion for all $i\leq ks$?
\end{question}

\begin{definition}\label{def: cycles}
Let $1\leq k\leq \min\{r,n-2\}$ be an integer and $\Gamma\subseteq \Ort(V)$ an arithmetic subgroup.
We call
\[
\cZ^k(\Gamma\backslash Q_V)_{\ratq}\coloneq \HH^{ks}(\Gamma, \cZ^k(Q_V)_{\ratq})
\]
the group of \emph{$\Gamma$-equivariant rational quadratic cycles on $Q_V$ of codimension $k$}. 
\end{definition}

\begin{remark}\label{rem: topdim}
Let us consider the case $k=r\leq n-2$.
Assume that $V$ is not anisotropic.
Then the virtual cohomological dimension of every arithmetic subgroup $\Gamma\subseteq \Ort(V)$ is less than $rs$ and, therefore,
$\cZ^r(\Gamma\backslash Q_V)_{\ratq}$ is torsion.
\end{remark}

\subsection{Equivariant Chow groups}
Define $\cC^k(Q_V)_{\ratq}\subseteq \cC^k(Q_V)_{\spe}$ as the preimage of $\cZ^k(Q_V)_{\ratq}$ under the divisor map and  $\cC^k(Q_V)_{\ratq}^+\subseteq \cC^k(Q_V)_{\spe}^+$ to be the subgroup generated by those pairs $(W,f)$ with $W$ positive-definite and $\div(f)\in \cZ^k(Q_V)_{\ratq}^{+}$.
As in the case of special cycles, the sequence
\[
\cC^k(Q_V)_{\ratq} \xlongrightarrow{\div}\cZ^k(Q_V)_{\ratq} \xlongrightarrow{\weight}\Z\too 0
\]
is exact and $\cC^1(Q_V)_{\ratq}^+=\cC^1(Q_V)_{\ratq}$ is just the group of non-zero meromorphic functions on $Q_V$ with rational quadratic divisor.

\begin{lemma}\label{lem: anexact}
Assume that $V$ is anisotropic.
The sequence
\[
\cC^k(Q_V)_{\ratq}^+\xlongrightarrow{\div} \cZ^k(Q_V)_{\ratq}\xlongrightarrow{\weight}\Z \too 0
\]
of $\Ort(V)$-modules is exact for every integer $1\leq k \leq \min\{r,n-2\}$.
\end{lemma}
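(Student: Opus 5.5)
The argument follows Lemma \ref{lem: aniso}, with every subspace in the chain required to be positive definite. Since $V$ is anisotropic, every subspace is non-degenerate. The composition $\weight\circ\div$ is zero because $\div$ factors through $\cZ^k(Q_V)_{\spe}$, where the weight of any divisor vanishes by the exact sequence for special cycles. Surjectivity of $\weight$ is clear, since $\cP_k\neq\emptyset$ when $k\leq r$. So it remains to show that $\ker(\weight)$, which is generated by the differences $\Delta_{W_1}-\Delta_{W_2}$ with $W_1,W_2\in\cP_k$, lies in the image of $\cC^k(Q_V)_{\ratq}^+$.

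The key step is a chain
\[
W_1=U_0,U_1,\ldots,U_t=W_2,\qquad U_i\in\cP_k,\quad \dim(U_i\cap U_{i+1})=k-1 .
\]
Intersections of positive definite spaces are positive definite, so each $V_i=U_i\cap U_{i+1}$ is positive definite of dimension $k-1$.

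To build the chain, I would work inside a positive definite subspace. Let $P\subseteq V$ be a maximal positive definite subspace containing $W_1$; it has dimension $r$. Any $k$-dimensional subspace of $P$ is positive definite, so inside $P$ one can move from $W_1$ to any other $k$-dimensional subspace of $P$ through a chain of $k$-dimensional subspaces with consecutive intersections of dimension $k-1$, replacing one basis vector at a time. The difficulty is that $W_2$ need not lie in the same $P$.

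The step I expect to be the main obstacle is connecting different maximal positive definite subspaces. I would first try to show that $W_1+W_2$ can be reached by a sequence of positive definite $k$-spaces, each sharing a $(k-1)$-dimensional subspace with the next. Concretely, write $W_1=\Span(a_1,\dots,a_k)$ and $W_2=\Span(b_1,\dots,b_k)$, and replace the $a_j$ by the $b_j$ one at a time. At step $j$ one must choose the new basis vector so that the resulting span stays positive definite. Because positive definiteness is an open condition, rational vectors close to a path in the real space $V_\R$ suffice, and the real Grassmannian of positive definite $k$-planes is connected (it is a symmetric space). So I would approximate a real path by a rational chain: cover the path by small neighbourhoods, and within each neighbourhood swap one basis vector at a time, choosing rational vectors near the real ones by density of $V$ in $V_\R$. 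This density argument is the delicate point.

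Given the chain, choose nonzero $w_{i1},w_{i2}\in V_i$ orthogonal to $U_i$ respectively $U_{i+1}$. The pair $(\Delta_{V_i},f_{w_{i1},w_{i2}})$ has divisor $\Delta_{U_i}-\Delta_{U_{i+1}}$, as in the proof of Lemma \ref{lem: aniso}. It is therefore a generator of $\cC^k(Q_V)_{\ratq}^+$, since $V_i$ is positive definite and its divisor is rational quadratic. Summing over $i$ expresses $\Delta_{W_1}-\Delta_{W_2}$ as $\div$ of an element of $\cC^k(Q_V)_{\ratq}^+$, which proves exactness in the middle. $\Ort(V)$-equivariance of all maps is immediate from $g.\Delta_W=\Delta_{gW}$.
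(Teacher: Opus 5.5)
Your reduction to a chain $W_1=U_0,\dots,U_t=W_2$ of positive definite $k$-planes with $\dim(U_i\cap U_{i+1})=k-1$ matches the paper, but you build the chain by a genuinely different, topological route. You use connectedness of the real manifold of positive definite $k$-planes in $V_\R$, openness of positive definiteness, and density of rational planes. This works, but the step you call delicate has to be written out.

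\emph{Connectedness.} Fix a maximal positive definite $P_\R\subseteq V_\R$ and write each positive definite $k$-plane as the graph of a strict contraction $A$ from a $k$-plane of $P_\R$ to $P_\R^\perp$. The graphs of $tA$, $0\leq t\leq 1$, deform it into $\mathrm{Gr}_k(P_\R)$, which is connected. Calling the space a symmetric space is not by itself an argument.

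\emph{Approximation.} Around each point of the path, with basis $e_1,\dots,e_k$, choose $\varepsilon>0$ such that every $k$-tuple $\varepsilon$-close to $(e_1,\dots,e_k)$ is linearly independent with positive definite Gram matrix. Cover the path by finitely many such neighbourhoods and pick rational planes in consecutive overlaps. Give each rational plane a rational basis $\varepsilon$-close to the relevant $e$, then swap one vector at a time, discarding repetitions.

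The paper instead runs a short algebraic induction on $\dim(W_1\cap W_2)$. Take a $(k-1)$-dimensional $U'\subseteq W_1$ containing $W_1\cap W_2$. Since $U'+W_2$ contains the $k$-dimensional positive space $W_2$, the orthogonal complement of $U'$ in $U'+W_2$ contains a vector $\tilde u$ with $q(\tilde u)>0$. This $\tilde u$ is the orthogonal projection of some $u\in W_2$. Then $U'+\Span(u)=U'+\Span(\tilde u)$ is positive definite, meets $W_1$ in dimension $k-1$, and meets $W_2$ in one dimension more than $W_1$ did.

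This is exactly the ``suitable new basis vector'' your direct swapping attempt lacked: swapping in $b_j$ itself can destroy positivity, whereas swapping in a vector of $W_2$ whose component orthogonal to $U'$ is positive cannot. The paper's route gives a chain of length $k-\dim(W_1\cap W_2)$ with no analysis. Yours is more conceptual (rational points of a connected real manifold cut out by an open condition are chain-connected), but it needs the approximation and gives no control over the chain.

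One correction, inherited from the wording of Lemma \ref{lem: aniso}: a nonzero vector of $V_i\subseteq U_i$ orthogonal to $U_i$ would be isotropic, so no such vector exists here. You want $w_{i1}\in U_i\cap V_i^\perp$ and $w_{i2}\in U_{i+1}\cap V_i^\perp$. Then $\langle w_{i1},\cdot\rangle$ cuts out $Q_{(V_i+\Span(w_{i1}))^\perp}=Q_{U_i^\perp}$ on $Q_{V_i^\perp}$, and similarly for $w_{i2}$. This gives $\div(\Delta_{V_i},f_{w_{i1},w_{i2}})=\Delta_{U_i}-\Delta_{U_{i+1}}$.
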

\begin{proof}
Let $W_1, W_2$ be two $k$-dimensional positive definite subspaces of $V$.
Arguing as in the proof of Lemma \ref{lem: aniso} it is enough to find a sequence of positive definite $k$-dimensional subspaces
\[
W_1=U_0,U_1,\ldots, U_{t-1},U_t=W_2
\]
such that $\dim(U_i\cap U_{i+1})=k-1$ for all $1\leq i \leq t-1$.
If $\dim(W_1\cap W_2)=k-1$, we are already done.
If not, let $U'_1\subseteq W_1$ be any $(k-1)$-dimensional subspace containing the intersection $W_1\cap W_2$ and put $V'_1=U'_1+W_2$.
Then $V'_1$ has real signature $(r',s')$ with $s'\geq k$.
Therefore, there exists a vector $\tilde{u}$ in the orthogonal complement $(U'_1)^\perp$ of $U'_1$ in $V'_1$ with $q(\tilde{u})>0$.
Since orthogonal projection maps $W_2$ surjectivity onto $(U'_1)^\perp$ we may choose a preimage $u\in W_2$ of $\tilde{u}$.
Let $U_1$ be the span of $U_1'$ and $u$, which is the same as the span of $U_1'$ and $\tilde{u}$.
It is a $k$-dimensional positive-definite subspace of $V$.
By construction, we have $\dim(W_1\cap U_1)=k-1$ and $\dim(W_2 \cap U_1)=\dim (W_1\cap U_1)+1$.
The claim thus follows by induction.
\end{proof}

One of our main aims is to analyze when certain $\Gamma$-equivariant rational quadratic cycles lift to cohomology classes with values in $\cC^k(Q_V)_{\ratq}$.
To that end, we put
\[
\cC^k(\Gamma\backslash Q_V)_{\ratq}\coloneq \HH^{ks}(\Gamma,\cC^k(Q_V)_{\ratq})
\]
for $1\leq k \leq \min\{r,n-2\}$ and define the \emph{k-th $\Gamma$-equivariant rational quadratic Chow group} as the cokernel
\[
\CH^k(\Gamma \backslash Q_V)_{\ratq}\coloneq \Coker(\div\colon \cC^k(\Gamma\backslash Q_V)_{\ratq}\to \cZ^k(\Gamma\backslash Q_V)_{\ratq}).
\]
Given a $\Gamma$-equivariant rational quadratic cycle $\sD\in \cZ^k(\Gamma\backslash Q_V)_{\ratq}$, write $\{\sD\}$ for its image in $\CH^k(\Gamma \backslash Q_V)_{\ratq}$.
The weight map induces a homomorphism
\begin{equation}\label{eq: weight}
\weight\colon \CH^k(\Gamma \backslash Q_V)_{\ratq} \too \HH^{ks}(\Gamma,\Z).
\end{equation}
In case $V$ is anisotropic, we refine the $\Gamma$-equivariant rational quadratic Chow group by putting
\[
\cC^k(\Gamma\backslash Q_V)_{\ratq}^+\coloneq \HH^{ks}(\Gamma,\cC^k(Q_V)_{\ratq}^+)
\]
and
\[
\CH^k(\Gamma \backslash Q_V)_{\ratq}^+\coloneq \Coker(\div\colon \cC^k(\Gamma\backslash Q_V)_{\ratq}^+\to \cZ^k(\Gamma\backslash Q_V)_{\ratq}).
\]
Note that there is a canonical surjection $\CH^k(\Gamma \backslash Q_V)_{\ratq}^+ \onto \CH^k(\Gamma \backslash Q_V)_{\ratq}$.
Given a class $\sD\in \cZ^k(\Gamma\backslash Q_V)_{\ratq}$, write $\{\sD\}^+$ for its image in $\CH^k(\Gamma \backslash Q_V)_{\ratq}^+$.

The main goal of this section is to show that the weight map \eqref{eq: weight} is essentially an injection in case $k=1$.
\begin{lemma}\label{lem: trivialize}
Let $\Gamma\subseteq\Ort(V)$ be a neat arithmetic subgroup and $W\subseteq V$ a positive-definite subspace.
The stabilizer of the subspace $W$ in $\Gamma$ acts pointwise trivially on $W$.
\end{lemma}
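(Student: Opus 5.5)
Let $\Gamma_W$ denote the stabilizer of $W$ in $\Gamma$. Restriction to $W$ gives a homomorphism
\[
\rho\colon \Gamma_W\too \Ort(W),\quad \gamma\mapstoo \gamma|_W,
\]
and the plan is to show that $\rho$ has finite image and that neatness forces a finite image to be trivial.

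\textbf{Step 1: the image of $\rho$ is finite.} Since $\Gamma$ is arithmetic, it is commensurable with $\Ort(L)$ for an even lattice $L\subseteq V$. Hence there is a finite index subgroup $\Gamma'\subseteq\Gamma$ preserving $L$. The subgroup $\Gamma'\cap\Gamma_W$ then preserves the lattice $L\cap W$ of $W$. This is a full lattice in $W$, because $W$ is defined over $\Q$ and $L$ spans $V$. Consequently $\rho(\Gamma'\cap\Gamma_W)\subseteq \Ort(L\cap W)$. Because $W$ is positive definite, $\Ort(L\cap W)$ is finite: it is a discrete subgroup of the compact group $\Ort(W\otimes\R)$, or more concretely it permutes the finitely many lattice vectors of bounded length spanning $W$. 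Since $\Gamma'\cap\Gamma_W$ has finite index in $\Gamma_W$, the image $\rho(\Gamma_W)$ is finite as well.

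\textbf{Step 2: neatness forces each $\rho(\gamma)$ to be trivial.} Fix $\gamma\in\Gamma_W$. By Step 1, $\rho(\gamma)$ has finite order, so its eigenvalues are roots of unity. Since $\gamma$ preserves $W$, these eigenvalues are among the eigenvalues of $\gamma$ on $V$. Neatness says the subgroup of $\C^\times$ generated by the eigenvalues of $\gamma$ is torsion-free, so every root of unity among them equals $1$. Thus $\rho(\gamma)$ has finite order and all its eigenvalues equal $1$. A finite-order endomorphism in characteristic $0$ is semisimple, so $\rho(\gamma)$ is the identity. Hence $\Gamma_W$ acts pointwise trivially on $W$.

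\textbf{Main obstacle.} The only real point is Step 1, namely deducing finiteness of the image from arithmeticity. The route above uses the lattice $L\cap W$. Alternatively, one can note that $\rho(\Gamma_W)$ is an arithmetic subgroup of the $\Q$-anisotropic, $\R$-compact group $\Ort(W)$, and is therefore finite. Step 2 is routine.
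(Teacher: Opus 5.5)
Your proof is correct and follows the same route as the paper: the image of $\Gamma_W$ in $\Ort(W)$ is finite because $W$ is positive definite (the paper cites that this image is an arithmetic subgroup of $\Ort(W)$), and neatness then forces the image to be trivial. Your Step 1 makes the finiteness concrete via the lattice $L\cap W$, and your Step 2 spells out what the paper leaves implicit: eigenvalues of $\gamma|_W$ are eigenvalues of $\gamma$, so by neatness $\gamma|_W$ is unipotent, and being of finite order it is the identity.
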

\begin{proof}
The stabilizer of $W$ in $\Gamma$ is an arithmetic subgroup of $\Ort(W)\times \Ort(W^\perp)$.
Its projection onto the first factor is an arithmetic subgroup of $\Ort(W)$, which is finite since $W$ is positive-definite.
By the neatness of $\Gamma$, it must be trivial.
\end{proof}

Write $\cZ^k(Q_V)_{\ratq}^0\subseteq \cZ^k(Q_V)_{\ratq}$ for the kernel of the weight map.
\begin{proposition}\label{prop: main}
Let $\Gamma\subseteq \Ort(V)$ be a neat arithmetic subgroup. 
There exists a $\Gamma$-invariant section of the divisor map
\[
\div\colon \cC^1(Q_V)_{\ratq} \ontoo \cZ^1(Q_V)_{\ratq}^0.
\]
\end{proposition}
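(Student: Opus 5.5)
We must construct a $\Gamma$-equivariant homomorphism $s\colon \cZ^1(Q_V)_{\ratq}^0\to \cC^1(Q_V)_{\ratq}$ with $\div\circ s=\mathrm{id}$. Here $\cC^1(Q_V)_{\ratq}$ is the group of non-zero rational functions on $Q_V$ with rational quadratic divisor. The plan is to normalize the functions $f_{w_1,w_2}$ from the previous section by picking canonical generators of the lines.

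First observe that $\cZ^1(Q_V)_{\ratq}^0$ is a free abelian group. It is generated by the differences $\Delta_{W}-\Delta_{W_0}$, where $W$ runs through $\cP_1\setminus\{W_0\}$ for a fixed base point $W_0\in\cP_1$. However, such a basis is not $\Gamma$-stable, so defining $s$ on it would not give an equivariant map. Instead I would define $s$ on all differences at once. For each $W\in\cP_1$, choose a generator $w_W\in W$, and for $\sum_W a_W\Delta_W$ with $\sum_W a_W=0$ put
\[
s\Bigl(\sum_W a_W\Delta_W\Bigr)\coloneq \prod_W \langle w_W,\cdot\rangle^{a_W}.
\]
This is a well-defined rational function on $\Proj(V)$, homogeneous of degree $\sum a_W=0$, and restricted to $Q_V$ its divisor is $\sum_W a_W\Delta_W$. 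The map is a homomorphism from the weight-zero subgroup into the multiplicative group of rational functions, and it is a section of $\div$. It is well defined only because the weight is zero: a product with nonzero total exponent would not be a function on projective space.

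The remaining issue, and the main point, is $\Gamma$-equivariance. We have $\gamma.\langle w_W,\cdot\rangle=\langle w_W,\gamma^{-1}\cdot\rangle=\langle \gamma w_W,\cdot\rangle$, so we need $\gamma w_W=w_{\gamma W}$ for all $\gamma\in\Gamma$ and $W\in\cP_1$. The choice must therefore be made orbitwise. For each $\Gamma$-orbit in $\cP_1$, pick a representative $W$ and a generator $w_W$, and set $w_{\gamma W}\coloneq\gamma w_W$. This is well defined exactly when the stabilizer $\Gamma_W$ fixes $w_W$. By Lemma~\ref{lem: trivialize}, neatness of $\Gamma$ implies that $\Gamma_W$ acts trivially on $W$, so this holds; without neatness, $\Gamma_W$ could act on $W$ by $-1$ and the construction would fail. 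This is the only genuinely delicate step, and Lemma~\ref{lem: trivialize} handles it.

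With generators chosen this way, $\gamma.s(D)=s(\gamma D)$ for every $D\in\cZ^1(Q_V)^0_{\ratq}$, since $\gamma$ permutes the linear forms $\langle w_W,\cdot\rangle$ according to its action on $\cP_1$. This gives the desired $\Gamma$-invariant section.
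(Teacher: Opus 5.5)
Your proof is correct and is essentially the paper's argument: choose generators $v_W$ compatibly along $\Gamma$-orbits, which is possible because Lemma~\ref{lem: trivialize} (neatness) forces $\Gamma_W$ to fix $W$ pointwise, and send $\sum_W n_W\Delta_W$ to $\prod_W\langle v_W,\cdot\rangle^{n_W}$. You also spell out two points the paper leaves implicit: that weight zero makes this a well-defined function on $Q_V$, and that the orbitwise choice is well defined.
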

\begin{proof}
By Lemma \ref{lem: trivialize}, we may choose for every positive-definite $1$-dimensional subspace $W\subseteq V$ a generator $v_W$ such that $v_{\gamma W}=\gamma v_{W}$ for all $\gamma \in \Gamma$.
The homomorphism
\[
\cZ^1(Q_V)_{\ratq}^0\too \cC^1(Q_V)_{\ratq},\quad \sum_{W} n_W  \Delta_{W}\mapstoo \prod_{W} \langle v_W,\cdot \rangle^{n_W},
\]
gives the desired section.
\end{proof}

\begin{corollary}\label{cor: trivial}
Let $\Gamma\subseteq\Ort(V)$ be an arithmetic subgroup.
For $k=1$, the kernel of the weight map
\[
\weight\colon \CH^1(\Gamma \backslash Q_V)_{\ratq} \too \HH^{s}(\Gamma,\Z)
\]
is torsion.
If $\Gamma$ is furthermore neat, then the kernel is trivial.
\end{corollary}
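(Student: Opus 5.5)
We prove the neat case first and then pass to general $\Gamma$ by a transfer argument. Let $\Gamma$ be neat. The exact sequence $\cC^1(Q_V)_{\ratq}\to\cZ^1(Q_V)_{\ratq}\to\Z\to 0$ splits into two short exact sequences. The first is
\[
0\too \cZ^1(Q_V)_{\ratq}^0\too \cZ^1(Q_V)_{\ratq}\xlongrightarrow{\weight}\Z\too 0 .
\]
The second is
\[
0\too K\too \cC^1(Q_V)_{\ratq}\xlongrightarrow{\div}\cZ^1(Q_V)_{\ratq}^0\too 0 ,
\]
where $K$ is the kernel of the divisor map on $\cC^1(Q_V)_{\ratq}$.

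By Proposition \ref{prop: main}, the second sequence admits a $\Gamma$-equivariant section $\sigma$. Hence $\div_*\circ\sigma_*=\mathrm{id}$ on cohomology, and the induced map
\[
\HH^s(\Gamma,\cC^1(Q_V)_{\ratq})\too \HH^s(\Gamma,\cZ^1(Q_V)_{\ratq}^0)
\]
is surjective. So the image of $\div\colon\cC^1(\Gamma\backslash Q_V)_{\ratq}\to\cZ^1(\Gamma\backslash Q_V)_{\ratq}$ equals the image of $\HH^s(\Gamma,\cZ^1(Q_V)^0_{\ratq})$ in $\HH^s(\Gamma,\cZ^1(Q_V)_{\ratq})$.

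The long exact sequence of the first short exact sequence gives
\[
\HH^s(\Gamma,\cZ^1(Q_V)^0_{\ratq})\too\HH^s(\Gamma,\cZ^1(Q_V)_{\ratq})\xlongrightarrow{\weight}\HH^s(\Gamma,\Z).
\]
Exactness in the middle term says that the kernel of $\weight$ on $\cZ^1(\Gamma\backslash Q_V)_{\ratq}$ is precisely that image. Combining this with the previous paragraph, the kernel of $\weight$ on $\CH^1(\Gamma\backslash Q_V)_{\ratq}$ is zero. This settles the neat case; note that it uses no hypothesis on $V$ beyond those of the paper.

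Now let $\Gamma$ be an arbitrary arithmetic subgroup. Choose a normal neat subgroup $\Gamma'\subseteq\Gamma$ of finite index $d$. For any $\Gamma$-module $M$, restriction and corestriction satisfy $\mathrm{cor}\circ\mathrm{res}=d$ on $\HH^s(\Gamma,M)$, and both maps are natural in $M$. In particular they commute with $\div$ and with $\weight$, so they descend to the Chow groups $\CH^1(\Gamma\backslash Q_V)_{\ratq}$ and $\CH^1(\Gamma'\backslash Q_V)_{\ratq}$ and commute with the weight maps there.

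Take $x\in\CH^1(\Gamma\backslash Q_V)_{\ratq}$ with $\weight(x)=0$. Then $\mathrm{res}(x)$ has weight zero, so $\mathrm{res}(x)=0$ by the neat case. Hence
\[
d\,x=\mathrm{cor}(\mathrm{res}(x))=0,
\]
and the kernel is killed by $d$, in particular torsion.

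The only point needing care is that restriction and corestriction are well defined on the cokernels, i.e.\ that they preserve the images of $\div$. This is immediate from naturality, since $\div$ is a map of $\Gamma$-modules. So I expect no step to be a genuine obstacle: the real input is the equivariant section of Proposition \ref{prop: main}.
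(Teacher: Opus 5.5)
Your proof is correct and takes the paper's approach. The neat case is the unpacking of ``immediate consequence of Proposition~\ref{prop: main}'': the equivariant section makes $\HH^s(\Gamma,\cC^1(Q_V)_{\ratq})\to\HH^s(\Gamma,\cZ^1(Q_V)^0_{\ratq})$ surjective, and the long exact sequence of the weight map finishes. For the general case you pass to a finite-index neat subgroup and use $\mathrm{cor}\circ\mathrm{res}=[\Gamma:\Gamma']$ on the Chow cokernels instead of the Hochschild--Serre spectral sequence; this is a slightly more elementary form of the same reduction and also shows the kernel is killed by $[\Gamma:\Gamma']$.
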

\begin{proof}
The neat case is an immediate consequence of Proposition \ref{prop: main}.
The general case follows from the fact that every arithmetic subgroup has a normal, neat subgroup of finite-index and the Hochschild--Serre spectral sequence.
\end{proof}

Corollary \ref{cor: trivial} implies, in particular, that $\CH^1(\Gamma \backslash Q_V)_{\ratq}\otimes \Q$ is finite dimensional for every arithmetic group $\Gamma\subseteq \Ort(V)$.
\begin{question}
Is $\CH^k(\Gamma \backslash Q_V)_{\ratq}\otimes \Q$ finite dimensional for all $1\leq k \leq \min\{r,n-2\}$?
Is $\CH^k(\Gamma \backslash Q_V)_{\ratq}^+\otimes \Q$ finite dimensional in case $V$ is anisotropic?
\end{question}
If $V$ is positive definite, then $\CH^k(\Gamma \backslash Q_V)_{\ratq}\otimes \Q=(\CH^k(Q_V)\otimes \Q)^\Gamma$ is at most one-dimensional.
We give a few positive answers to the question above in arbitrary signature that follow purely from cohomological dimension considerations:
for that assume that $k=r\leq n-2$.
In case $V$ is not anisotropic, the group $\cZ^r(\Gamma \backslash Q_V)_{\ratq}$ is torsion for every arithmetic subgroup $\Gamma\subseteq \Ort(V)$ by Remark \ref{rem: topdim} and, hence, $\CH^r(\Gamma \backslash Q_V)_{\ratq}\otimes \Q=0$.
In case $V$ is anisotropic, the cohomological dimension of any neat arithmetic subgroup $\Gamma$ is equal to $rs$ and, therefore, the functor $\HH^{rs}(\Gamma,\cdot)$ is right exact.
It follows that the weight map induces isomorphisms
\begin{equation}\label{eq: weightisom}
\CH^r(\Gamma \backslash Q_V)_{\ratq}^+\xlongrightarrow{\sim}\CH^r(\Gamma \backslash Q_V)_{\ratq}\xlongrightarrow{\sim}\HH^{rs}(\Gamma,\Z)\cong \Z
\end{equation}
assuming that $\Gamma$ is neat.
For general arithmetic subgroups, it still follows that the kernel of the weight map is torsion by a Hochschild--Serre argument.

%%%%%%%%%%%%%%%%%%%%%%%%%%%%%%%%%%%%%%%%%%%%%%%%%%%%%%%%%%%%%%%%%%%%%%%%
% Generating series
%%%%%%%%%%%%%%%%%%%%%%%%%%%%%%%%%%%%%%%%%%%%%%%%%%%%%%%%%%%%%%%%%%%%%%%%

\section{Generating series of rational quadratic cocycles}
In this section, we will define generating series of cohomology classes with values in rational quadratic cycles and study their modularity properties.
The singular chain complex of a topological space $X$ will be denoted by $(C_\bullet(X), d_\bullet)$ and the singular homology groups by $\HH_\bullet(X)$.
Given an abelian group $M$ and an integer $i\in \Z$, write $M[i]$ for the complex concentrated in degree $i$ with $i$-th entry equal to $M$.

\subsection{Intersection numbers}
Let $X_\infty\coloneq X_\infty(V)$ be the space of maximal (i.e., $s$-dimensional) negative-definite subspaces of $V_\R\coloneq V\otimes \R$.
This is a model for the symmetric space of the special orthogonal group $\SO(V_\R)$.
In particular, it is a contractible smooth manifold of dimension $rs$.
As explained in \cite[p.~130]{KM} an orientation of $V$ canonically induces an orientation of $X_\infty$.
We hence fix an orientation of $V$ once and for all.

Given a positive-definite subspace $W\subseteq V$ of dimension $k$, we consider the special topological cycle
\[
\Delta_{W,\infty}\coloneq\{Z\in X_\infty\ \vert\ Z\mbox{ is orthogonal to } W\}.
\]
The special cycle $\Delta_{W,\infty}$ is naturally identified with the symmetric space $X_\infty(W^\perp)$.
Thus it is a smooth contractible manifold of codimension $ks$.
If $\underline{w}=(w_1,\ldots,w_k)$ is an ordered basis of $W$, then
\begin{equation}\label{eq: cyc intersection}
\Delta_{W,\infty}=\bigcap_{i=1}^k \Delta_{\Span(w_i),\infty}.
\end{equation}
Moreover, $\underline{w}$ induces an orientation of $\underline{W}$, which in turn yields an orientation of the associated cycle (see \cite[p.~131]{KM}).
We denote this oriented cycle by $\Delta_{\underline{w},\infty}$.

\begin{proposition}
Let $W\subseteq V$ be a positive-definite subspace of dimension $k$.
Then:
\[
\widetilde{\HH}_q( X_\infty \setminus \Delta_W)\cong
\begin{cases}
\Z &\mbox{if}\ q=ks-1\\
0 & \mbox{otherwise.}
\end{cases}
\]
A choice of orientation of $W$ gives an identification of $\widetilde{\HH}_{ks-1}( X_\infty \setminus \Delta_W)$ with $\Z$.
\end{proposition}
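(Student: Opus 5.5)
The plan is to reduce to the homology of a sphere by finding a tubular-neighbourhood (in fact global) product structure on $X_\infty$ adapted to the submanifold $\Delta_{W,\infty}$. Here $\Delta_W$ in the statement is read as the topological cycle $\Delta_{W,\infty}$.

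\textbf{Step 1: a global product decomposition.} For $Z\in X_\infty$, orthogonal projection onto $W^\perp_\R$ along $W_\R$ restricts to an injection on $Z$. It is injective because $W_\R$ is positive definite and $Z$ is negative definite, so $Z\cap W_\R=0$. The image $Z'$ is an $s$-dimensional subspace of $W_\R^\perp$. Writing $Z=\{z'+A z'\mid z'\in Z'\}$ with $A\colon Z'\to W_\R$ linear, negative definiteness of $Z$ reads $q(z')+q(Az')<0$. So $Z'$ is negative definite in $W^\perp_\R$, and $A$ ranges over an open convex, star-shaped around $0$, subset of $\mathrm{Hom}(Z',W_\R)$. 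This gives a homeomorphism $X_\infty\cong E$, where $E\to X_\infty(W^\perp)$ is a fibre bundle whose fibre over $Z'$ is that convex subset of $\mathrm{Hom}(Z',W_\R)\cong\R^{ks}$.

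Under this identification, $\Delta_{W,\infty}$ is exactly the zero section $A=0$. Radial rescaling in each fibre shows that $X_\infty\setminus\Delta_{W,\infty}$ deformation retracts onto the unit-sphere bundle of the vector bundle $\mathrm{Hom}(Z',W_\R)$ over $X_\infty(W^\perp)$. One can alternatively use the standard fact that $\Delta_{W,\infty}$ is a closed contractible submanifold of codimension $ks$ with trivial normal bundle, and invoke the tubular neighbourhood theorem.

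\textbf{Step 2: compute the homology.} The base $X_\infty(W^\perp)$ is contractible, being a symmetric space, so the sphere bundle is homotopy equivalent to a single fibre $S^{ks-1}$. Hence $\widetilde{\HH}_q(X_\infty\setminus\Delta_{W,\infty})$ is $\Z$ for $q=ks-1$ and $0$ otherwise.

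A cleaner homological route avoids explicit retractions. Use the long exact sequence of the pair $(X_\infty,X_\infty\setminus\Delta_{W,\infty})$ together with contractibility of $X_\infty$. This gives $\widetilde{\HH}_{q}(X_\infty\setminus\Delta_{W,\infty})\cong\HH_{q+1}(X_\infty,X_\infty\setminus\Delta_{W,\infty})$. The Thom isomorphism for the trivial rank-$ks$ normal bundle then identifies the right-hand side with $\HH_{q+1-ks}(\Delta_{W,\infty})$, which is $\Z$ in degree $0$ only.

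\textbf{Step 3: the orientation.} The generator of $\widetilde{\HH}_{ks-1}$ is the class of a small linking sphere, i.e.\ the fundamental class of a fibre sphere, and fixing it amounts to orienting the normal fibre. Over $Z$, the normal fibre is $\mathrm{Hom}(Z',W_\R)\cong W_\R\otimes Z'^{\vee}$. Via the convention of \cite[p.~131]{KM}, an orientation of $W$ together with the fixed orientation of $V$, hence of $X_\infty$, determines the orientation of $\Delta_{W,\infty}$ and thus of the normal bundle. This pins down the generator, and changing the orientation of $W$ changes the sign.

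The main obstacle is Step 1: checking carefully that the map $Z\mapsto(Z',A)$ is a homeomorphism onto the described fibrewise-convex region. The key inequality is $q(z')+q(Az')<0$ combined with convexity in $A$. Once this is in place, Steps 2 and 3 are formal.
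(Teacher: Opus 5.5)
Your proof is correct, but it uses a different mechanism from the paper's. The paper's one-line proof appeals to $\Delta_{W,\infty}$ being totally geodesic in the non-positively curved symmetric space $X_\infty$. Implicitly, the normal exponential map is then a global diffeomorphism from the normal bundle of $\Delta_{W,\infty}\cong X_\infty(W^\perp)$ onto $X_\infty$, so the complement retracts onto the unit normal sphere bundle over a contractible base (cf.\ \cite[Section 2.4.4]{DGL} for $k=1$).

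Your Step 1 builds the same global product structure by elementary linear algebra, replacing the Riemannian input with your convexity check. In fact your graph fibres are exactly the fibres of the normal exponential map. A normal geodesic $t\mapsto \exp(t\tilde A)Z'$ stays inside $Z'\oplus W_\R$, and your coordinate $A$ is the geodesic one with $\tanh$ applied to its singular values. The fibre over $Z'$ is precisely the open unit ball of $\mathrm{Hom}(Z',W_\R)$ for the operator norm attached to $-q|_{Z'}$ and $q|_{W_\R}$. So you should retract onto the sphere bundle of radius $1/2$ for that norm, not an unspecified ``unit sphere bundle''; the Hilbert--Schmidt unit sphere, for instance, does not lie inside the fibre.

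Your excision-plus-Thom-isomorphism alternative is more general than either argument. It uses only that $\Delta_{W,\infty}$ is a closed contractible submanifold of codimension $ks$ of the contractible manifold $X_\infty$. It yields the homology, which is all that is claimed, though not the homotopy type.

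One side claim in Step 3 is wrong in general, though the proposition does not need it. Reversing the orientation of $W$ changes the identification by $(-1)^s$, not always by $-1$. To see this, suppose $r>k$. Let $g$ be the product of the reflections in a rational positive vector $a\in W$ and a rational positive vector $b\in W^\perp$. Then $g$ lies in $\SO(V)^+$, since its real spinor norm is $q(a)q(b)>0$, and it reverses the orientation of $W$. It fixes pointwise any $Z'\in\Delta_{W,\infty}$ with $Z'\perp b$, and such $Z'$ exist. At such a point $g$ acts on the normal fibre $\mathrm{Hom}(Z',W_\R)\cong W_\R\otimes Z'^{\vee}$ with determinant $(-1)^s$, hence by $(-1)^s$ on the linking class. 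The invariance $\Delta_{g\underline{w},\infty}\cdot gc=\Delta_{\underline{w},\infty}\cdot c$ asserted in the paper then forces the identifications for $\underline{w}$ and $g\underline{w}$ to differ by $(-1)^s$. This is consistent with the case $s=2$, where $w$ and $-w$ cut out the same complex divisor with its complex orientation.
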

\begin{proof}
This is a consequence of the fact that $\Delta_W$ is a totally geodesic submanifold of $X_\infty$ with respect to the standard Riemannian metric on $X_\infty$ (see \cite[Section 2.4.4]{DGL} for a detailed discussion in case $k=1$).
\end{proof}
Let $\SO(V)^+\subseteq \SO(V)$ be the subgroup of elements with trivial real spinor norm.
Then every element of the spin group $\SO(V)^+$ that fixes $W$ and acts orientation-preservingly on $W$ acts trivially on $\widetilde{\HH}_{ks-1}( X_\infty \setminus \Delta_W)$.

Let $W\subseteq V$ be a $k$-dimensional positive-definite subspace with fixed basis $\underline{w}$ and $c\in C_{ks}(X_\infty)$ a singular chain with $d_{ks-1}(c)\in C_{ks-1}(X_\infty\setminus \Delta_{W,\infty})$.
The \emph{signed intersection number} $\Delta_{\underline{w},\infty}\cdot c\in \Z$ of $\Delta_{\underline{w},\infty}$ and $c$ is defined via
\[
\Delta_{\underline{w},\infty}\cdot c \coloneq [d_{ks}(c)] \in \tilde{\HH}_{ks-1}(X_\infty\setminus \Delta_{W,\infty})=\Z,
\]
where the last equality is induced by the orientation of $W$ coming from $\underline{w}$.
The equality
\[
\Delta_{g\underline{w},\infty}\cdot g c= \Delta_{\underline{w},\infty}\cdot c
\]
holds for every $g\in \SO(V)^+$.

\subsection{Cohomology classes}
Recall that for $1\leq k\leq {n-2}$, the set of all $k$-dimensional positive-definite subspaces of $V$ is denoted by $\cP_k$. 
Define the $\Ort(V)$-stable subcomplex $\mathfrak{C}_\bullet^{(k)}\subseteq C_\bullet(X_\infty)$ by
\[
\mathfrak{C}_q^{(k)}\coloneq
\begin{cases}C_{q}(X_\infty) & \mbox{ for } q>ks,\\
C_{q}(X_\infty \setminus \bigcup_{W\in \cP_k}\Delta_{W,\infty})& \mbox{ for } q< ks,
\end{cases}
\]
\[
\mathfrak{C}^{(k)}_{ks}=\left\{c\in C_{ks}(X_\infty)\ \middle\vert\  d_{ks}(c)\in C_{ks-1}\left(X_\infty \setminus \bigcup_{W\in \mathcal{P}_k}\Delta_{W,\infty}\right)\right\}.
\]
The same arguments as in \cite[Section 2.4.1]{DGL} imply that the inclusion $\mathfrak{C}_\bullet^{(k)}\into C_\bullet(X_\infty)$ is a quasi-isomorphism.
In particular, $\mathfrak{C}_\bullet^{(k)}$ is a resolution of the trivial $\Ort(V)$-module $\Z$.

Write $\cB_k$ for the set of oriented bases of $k$-dimensional positive-definite subspaces of $V$.
A subset $\cO\subseteq\cB_k$ is called \emph{locally finite} if for all compact subsets $K\subseteq X_\infty$ the set $\{\underline{w}\in \cO\ \vert\ \Delta_{\underline{w}}\cap K \neq \emptyset\}$ has finite cardinality.
For every locally finite subset $\cO\subseteq\cB_k$ the assignment
\[
\mathfrak{C}^{(k)}_{ks} \too \Z,\quad c\mapstoo \sum_{\underline{w}\in\cO} \Delta_{\underline{w},\infty}\cdot c, 
\]
is well-defined and yields a homomorphism of complexes
\[
c^{\KM}_{\cO}\colon \mathfrak{C}^{(k)}_{\bullet}\too \Z[ks].
\]
Similarly, the assignment
\[
\mathfrak{C}^{(k)}_{ks} \too \cZ^k(Q_V)_{\ratq},\quad c\mapstoo \sum_{\underline{w}\in\cO} (\Delta_{\underline{w},\infty}\cdot c)\ \Delta_{\underline{w}}, 
\]
where $\Delta_{\underline{w}}\coloneq \Delta_{\Span(\underline{w})}\in \cZ^k(Q_V)_{\ratq}$,
defines a homomorphism of complexes
\[
c^{\ratq}_{\cO}\colon \mathfrak{C}^{(k)}_{\bullet}\too \cZ^k(Q_V)_{\ratq}[ks].
\]
Be definition, we have
\begin{equation}\label{eq: comp cocycles}
c^{\KM}_{\cO}=\weight\circ\,  c^{\ratq}_{\cO}
\end{equation}
If a subgroup $\Gamma\subseteq \SO(V)^+$ stabilizes $\cO$, then both, $c^{\KM}_{\cO}$ and $c^{\ratq}_{\cO}$, are $\Gamma$-equivariant,
Write $\sD^{\KM}_{\cO}$ respectively $\sD^{\ratq}_{\cO}$ for the corresponding classes in $\HH^{ks}(\Gamma, \Z)$ respectively $\HH^{ks}(\Gamma, \cZ^k(Q_V)_{\ratq})$ (see for example \cite[Section 2.1]{DGL} for the construction of the associated cohomology classes).
Equation \eqref{eq: comp cocycles} immediately implies that
\begin{equation}\label{eq: comp classes}
\weight\hspace{-0,2em}\left(\sD^{\ratq}_{\cO}\right)=\sD^{\KM}_{\cO}.
\end{equation}

\subsection{Kudla--Millson theta series}
For $\underline{w}\in \mathcal{B}_k$ write $G(\underline{w})$ for the associated Gram matrix, that is, $G(\underline{w})_{ij}=\langle w_i,w_j\rangle$.
Fix an even $\Z$-lattice $L\subseteq V$.
Let $L^\sharp$ be its dual lattice with respect to $\langle \cdot,\cdot\rangle$ and write
\[
\DD_L\coloneq L^\sharp/L
\]
for its discriminant module.
For a commutative ring $R$ denote by $\Sym_k(R)\subseteq \M_k(R)$ the $R$-module of symmetric $k\times k$-matrices.
Let $\beta$ be an element of $\Sym_k(\R)$.
Write $\beta > 0$ if $\beta$ is positive definite and $\beta \geq 0$ if $\beta$ is positive semi-definite.
Given a tuple of cosets $h=(h_1,\ldots,h_k)\in \DD_L^k$ and a matrix $\beta\in \Sym_k(\Q), \beta >0$ define
\[
\cO(\beta,h)\coloneq\{\underline{w}\in \cB_k\ \vert\ G(\underline{w})=\beta,\, w_i\in h_i\ \forall 1\leq i\leq k\}.
\]

\begin{lemma}
The subset $\cO(\beta,h)\subseteq \cB_k$ is locally finite.
\end{lemma}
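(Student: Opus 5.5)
The plan is to reduce local finiteness to a discreteness-plus-boundedness statement in $V_\R^k$. All vectors $w_i$ with $\underline{w}\in\cO(\beta,h)$ lie in the lattice $L^\sharp$, so the tuples $\underline{w}$ lie in the discrete subset $(L^\sharp)^k\subseteq V_\R^k$. It therefore suffices to show the following: for a compact $K\subseteq X_\infty$, the set of $\underline{w}\in (V_\R)^k$ with Gram matrix $\beta$ and $\Delta_{\underline{w},\infty}\cap K\neq\emptyset$ is bounded.

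To get boundedness, I will use the standard majorant construction. For $Z\in X_\infty$ define the positive definite quadratic form
\[
q_Z(v)\coloneq q(v_{Z^\perp})-q(v_Z),
\]
where $v=v_{Z^\perp}+v_Z$ is the decomposition along $V_\R=Z^\perp\oplus Z$. Since $Z$ is maximal negative definite, $Z^\perp$ is positive definite, so $q_Z$ is positive definite. Moreover, $q_Z$ depends continuously on $Z$. By compactness of $K$ there is therefore a constant $C>0$ and a fixed Euclidean norm $\|\cdot\|$ on $V_\R$ with $\|v\|^2\leq C\, q_Z(v)$ for all $Z\in K$ and all $v\in V_\R$. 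This uniform comparison is the only mildly technical point: I will argue by continuity of $Z\mapsto q_Z$ into the open cone of positive definite forms, together with compactness of $K\times\{\|v\|=1\}$.

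Now suppose $Z\in K\cap\Delta_{\underline{w},\infty}$, so that $Z$ is orthogonal to every $w_i$. Then $w_i\in Z^\perp$, hence $q_Z(w_i)=q(w_i)=2\beta_{ii}$ with the normalization $q(v)=2\langle v,v\rangle$. Consequently $\|w_i\|^2\leq 2C\beta_{ii}$ for each $i$. Thus every $\underline{w}\in\cO(\beta,h)$ whose cycle meets $K$ lies in a fixed bounded subset of $V_\R^k$ intersected with the discrete set $(L^\sharp)^k$, and this set is finite.

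I expect no real obstacle beyond the uniform bound in the second paragraph. The Gram condition alone does not bound the vectors, because $V$ may be indefinite; the geometric condition $\underline{w}\perp Z$ for some $Z\in K$ is exactly what converts the indefinite form into the positive definite majorant. The coset conditions $w_i\in h_i$ are used only to guarantee that the $w_i$ lie in $L^\sharp$, which gives the discreteness.
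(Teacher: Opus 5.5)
Your proof is correct and takes essentially the paper's route. The paper quotes \cite[Lemma 2.6]{DGL} for the boundedness of the vectors $w_i$ whose cycles meet a compact $K\subseteq X_\infty$, uses discreteness of the cosets $h_i\subseteq L^\sharp$, and passes from $k=1$ to general $k$ via $\Delta_{W,\infty}=\bigcap_i \Delta_{\Span(w_i),\infty}$; your majorant bound $\|w_i\|^2\leq C\,q_Z(w_i)=C\,q(w_i)$ for $Z\in K\cap\Delta_{\underline{w},\infty}$ does this coordinatewise, so the only difference is that you prove the cited boundedness lemma yourself instead of quoting it.
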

\begin{proof}
By \cite[Lemma 2.6]{DGL}, the set of all $\cO(\beta_{ii}, h_i)\subseteq \cB_1$ intersecting a given compact subset of $X_\infty$ is contained in a compact subset of $V_\R$.
Since $h_i$ is discrete, it follows that $\cO(\beta_{ii}, h_i)$ is locally finite for every $1\leq i\leq k$.
The assertion thus follows from \eqref{eq: cyc intersection}.
\end{proof}

Let $\beta,\ h$ be as before and write $\Gamma_h\subseteq \SO(V)^+$ for the stabilizer of $h=(h_1,\ldots,h_k)$ in $\SO(V)^+$, that is, the intersection of the stabilizers of $h_i$, $1\leq i \leq k$.
Note that $\Gamma_h$ is an arithmetic subgroup.
The class
\[
\sD^{\KM}_{\beta,h}\coloneq \sD^{\KM}_{\cO(\beta,h)}\in \HH^{ks}(\Gamma_h,\Z)
\]
is the \emph{Kudla--Millson class} associated to $\beta$ and $h$.
The class
\[
\sD^{\ratq}_{\beta,h}\coloneq \sD^{\ratq}_{\cO(\beta,h)}\in \HH^{ks}(\Gamma_h, \cZ^k(Q_V)_{\ratq})
\]
is the \emph{rational quadratic class} associated to $\beta$ and $h$.

Recall that a Siegel modular form $F$ of genus $g$ has a Fourier series expression of the form
\[
F(\tau)=\sum_{\beta \geq 0} a_\beta\, q^{\beta},\quad a_\beta\in \C,
\]
where $q^\beta$ is shorthand for $e^{2\pi i\Tr(\beta \tau)}$ and $\beta$ runs through the set of all positive semi-definite symmetric matrices in $\Sym_g(\Q)$.
We call the Fourier series
\[
F(\tau)=\sum_{\beta >0 } a_\beta
\]
where $\beta$ runs through all positive definite matrices the \emph{non-singular part} of the modular form $F$.
Let $A$ be an abelian group.
We say that a formal expression
\[
F=\sum_{\beta\geq 0} a_\beta\, q^{\beta},\quad a_\beta\in A
\]
with $\beta$ as above is \emph{(the non-singular part of) a Siegel modular form of genus $g$ and weight $k$ with values in $A$} if the formal series
\[
\varphi(F)\coloneq \sum_{\beta} \varphi(a_\beta)\, q^{\beta}
\] 
is (the non-singular part of) the Fourier expansion of a Siegel modular form of genus $g$ and weight $k$ for every homomorphism $\varphi\colon A \to \C$.
If the genus is equal to $1$, we simply say that $F$ is (the non-singular part of) a modular form of weight $k$.
In that case $\beta$ simply runs through the set of positive rational numbers.
The following modularity result is the main theorem of \cite{KM}.
\begin{theorem}\label{thm: KM}
Let $1\leq k \leq r$ and $h\in \DD_L^k$.
The formal series
\[
\Theta^{\KM}_{h}\coloneq \sum_{\beta > 0 } \sD^{\KM}_{\beta,h}\, q^\beta
\]
is the the non-singular part of a Siegel modular form of genus $k$ and weight $n/2$ with values in $\HH^{ks}(\Gamma_h,\Z)$.
If $s$ is odd, then $\Theta^{\KM}_h$ is a Siegel modular form.
\end{theorem}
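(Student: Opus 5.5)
Theorem \ref{thm: KM} is the main theorem of \cite{KM}, so the task is to match the topological classes $\sD^{\KM}_{\beta,h}$ defined above with the de Rham classes of \cite{KM}; the modularity then comes from that paper. The plan has three steps.

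\textbf{Step 1: the Kudla--Millson theta form.} Let $\varphi_{\KM}$ be the Kudla--Millson Schwartz form, a closed $\SO(V_\R)$-invariant $ks$-form on $X_\infty$ with values in $\mathcal{S}(V_\R^k)$. Pair it with the characteristic function of the coset $h\subseteq (L^\sharp)^k$ and form the theta series
\[
\theta_h(\tau)=\sum_{\underline{x}\in h} \varphi_{\KM}(\underline{x},\tau).
\]
By \cite{KM}, $\theta_h$ is a closed $\Gamma_h$-invariant $ks$-form on $X_\infty$. As a function of $\tau$ it transforms like a Siegel modular form of genus $k$ and weight $n/2$, via Weil representation arguments. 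Its cohomology class in $\HH^{ks}(\Gamma_h\backslash X_\infty,\C)$ has a Fourier expansion $\sum_{\beta\geq 0}[\theta_{h,\beta}]\,q^\beta$.

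\textbf{Step 2: identifying the nonsingular coefficients.} The key input from \cite{KM} is that for $\beta>0$ the form $\theta_{h,\beta}$ is Poincaré dual to the locally finite cycle $\sum_{\underline{w}\in\cO(\beta,h)}\Delta_{\underline{w},\infty}$. Concretely, for every $\Gamma_h$-invariant compactly supported relative cycle $c$ one has
\[
\int_c \theta_{h,\beta}=\sum_{\underline{w}\in\cO(\beta,h)}\Delta_{\underline{w},\infty}\cdot c .
\]
Here the orientation conventions are those of \cite[p.~130--131]{KM}, which we adopted. To transport this to group cohomology, use that $\mathfrak{C}^{(k)}_\bullet$ is a free-enough resolution of $\Z$; it is acyclic by the quasi-isomorphism above. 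The de Rham comparison then sends $[\theta_{h,\beta}]$ to the class of the cochain $c\mapsto\int_c\theta_{h,\beta}$. The current equation $d\,\theta_{h,\beta}$-primitive $=\theta_{h,\beta}-\delta_{\Delta}$ from \cite{KM} shows this cochain is cohomologous to $c^{\KM}_{\cO(\beta,h)}$. Hence $\sD^{\KM}_{\beta,h}\otimes\C=[\theta_{h,\beta}]$.

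\textbf{Step 3: coefficients and the case $s$ odd.} Let $\varphi\colon \HH^{ks}(\Gamma_h,\Z)\to\C$ be a homomorphism. It factors through $\HH^{ks}(\Gamma_h,\Z)\otimes\C$, and this group is finite dimensional because arithmetic groups are of type (VFL). By de Rham duality, such a $\varphi$ is given by pairing with a compactly supported homology class, or more generally by evaluating linear functionals on cohomology. Applying $\varphi$ to $\theta_h$ gives a genuine Siegel modular form whose nonsingular coefficients are $\varphi(\sD^{\KM}_{\beta,h})$. For $s$ odd, \cite{KM} show the singular coefficients ($\beta$ degenerate) are also represented by the classes $\sD^{\KM}$, suitably defined. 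Indeed the boundary terms involve Euler forms of the negative bundle, which vanish in odd rank $s$. So the full series is modular.

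\textbf{Main obstacle.} The hard part is Step 2. One must check that the topological intersection cocycle built from $\mathfrak{C}^{(k)}_\bullet$ agrees on the nose, including sign and orientation conventions, with the Poincaré dual in \cite{KM}. This is also where non-compactness of $\Gamma_h\backslash X_\infty$ enters. I would handle it as in \cite[Section 2]{DGL} for $k=1$, and then use \eqref{eq: cyc intersection} to reduce the general case to products of codimension-$s$ currents.
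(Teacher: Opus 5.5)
Your proposal is correct and takes essentially the paper's route: the paper offers nothing beyond invoking the main theorem of \cite{KM}, tacitly identifying the intersection-number classes $\sD^{\KM}_{\beta,h}$ built on $\mathfrak{C}^{(k)}_\bullet$ with Kudla--Millson's Poincar\'e-dual classes, and your Step~2 just makes that identification explicit (the two $\Gamma_h$-equivariant cochains differ by the coboundary of $c\mapsto\int_c\Psi$ on $\mathfrak{C}^{(k)}_{ks-1}$, with $\Psi$ a sum of transgression forms smooth off the locally finite union of the $\Delta_{\underline{w},\infty}$). Two wording fixes: the identity $\int_c\theta_{h,\beta}=\sum_{\underline{w}}\Delta_{\underline{w},\infty}\cdot c$ is a statement about compact cycles in $\Gamma_h\backslash X_\infty$ (pairing with $\HH_{ks}(\Gamma_h,\Z)$), not about individual chains on $X_\infty$; and for $s$ odd the point is simply that every singular coefficient carries a positive power of the Euler form of the rank-$s$ tautological bundle and hence vanishes, not that it is represented by a class $\sD^{\KM}$.
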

Note that, if $V$ is positive definite, then $\Theta^{\KM}_h$ is just a classical Siegel theta series of a (coset of a) positive definite lattice.
In \cite{KM}, classes $\sD^{\KM}_{\beta,h}$ are constructed for $\beta$ semi-definite, and the formal $q$-series over all such $\beta$ is proven to be a Siegel modular form.
It is unclear how to promote the construction of $\sD^{\KM}_{\beta,h}$ for non-positive definite $\beta$ to rational quadratic classes.

For a coset $h\in \DD_L^k$ define the formal $q$-series
\[
\Theta^{\ratq}_h\coloneq \sum_{\beta >0} \{\sD^{\ratq}_{\beta,h}\}\, q^\beta
\]
with values in $\CH^k(\Gamma_h \backslash Q_V)_{\ratq}$.
In case $V$ is anisotropic, define
\[
\Theta^{\ratq,+}_h\coloneq \sum_{\beta >0} \{\sD^{\ratq}_{\beta,h}\}^+\, q^\beta
\]
In case $V$ is positive definite, modularity of Siegel theta series immediately implies that $\Theta^{\ratq}_h$ and $\Theta^{\ratq,+}_h$ are non-singular parts of Siegel modular forms of genus $k$ and weight $n/2$. 
Combining Theorem \ref{thm: KM} with Corollary \ref{cor: trivial} in the case $k=1$ or with the isomorphism \eqref{eq: weightisom} in the case $k=r$, respectively, yields the following modularity results for quadratic spaces of arbitrary signature:
\begin{theorem}\label{thm: ratq}
Assume that either $k=1$ or $k=r\leq n-2$.
For every coset $h\in \DD^k_L$ the formal $q$-series $\Theta^{\ratq,+}_h$ is the the non-singular part of a Siegel modular form of genus $k$ and weight $n/2$ with values in the $\Gamma_h$-equivariant rational Chow group.
If $s$ is odd, then $\Theta^{\ratq}_h$ is a cusp form.
In case $V$ is anisotropic, the same statements hold for $\Theta^{\ratq,+}_h$.
\end{theorem}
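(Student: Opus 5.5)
The plan is to transfer the modularity in Theorem \ref{thm: KM} from $\HH^{ks}(\Gamma_h,\Z)$ to the equivariant Chow group. The transfer uses the identity \eqref{eq: comp classes}, $\weight(\sD^{\ratq}_{\beta,h})=\sD^{\KM}_{\beta,h}$, together with the fact that in the two extremal cases $\weight$ is injective, at least after killing torsion. By definition, an $A$-valued series is modular if $\varphi(F)$ is modular for every homomorphism $\varphi\colon A\to\C$. So it suffices to show that every $\varphi\colon \CH^k(\Gamma_h\backslash Q_V)_{\ratq}\to\C$ (respectively on the $+$-version) factors as $\varphi=\psi\circ\weight$ for some $\psi\colon \HH^{ks}(\Gamma_h,\Z)\to\C$. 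Once this holds, applying $\varphi$ termwise gives
\[
\varphi(\Theta^{\ratq}_h)=\sum_{\beta>0}\psi\bigl(\weight\{\sD^{\ratq}_{\beta,h}\}\bigr)\,q^\beta=\psi(\Theta^{\KM}_h),
\]
which is the non-singular part of a Siegel modular form of genus $k$ and weight $n/2$ by Theorem \ref{thm: KM}. When $s$ is odd, Theorem \ref{thm: KM} says $\Theta^{\KM}_h$ is already a full Siegel modular form, and the same termwise argument transfers this statement as well.

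For the factorization I would argue as follows. Since $\C$ is divisible, hence injective as an abelian group, any $\varphi$ on $A=\CH^k$ that vanishes on $\Ker(\weight)$ extends from $A/\Ker(\weight)\cong\mathrm{im}(\weight)\subseteq \HH^{ks}(\Gamma_h,\Z)$ to a homomorphism $\psi$ on all of $\HH^{ks}(\Gamma_h,\Z)$. Because $\C$ is torsion-free, every $\varphi$ automatically kills torsion, so it is enough to know that $\Ker(\weight)$ is torsion. For $k=1$ this is exactly Corollary \ref{cor: trivial}; note that $\Gamma_h$ is arithmetic but not necessarily neat, which is why the torsion version is needed. For $k=r\le n-2$ there are two subcases:
\begin{enumerate}
\item If $V$ is not anisotropic, Remark \ref{rem: topdim} shows that $\cZ^r(\Gamma_h\backslash Q_V)_{\ratq}$ is torsion. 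Then $\CH^r\otimes\C=0$, every $\varphi$ vanishes, and the statement is trivial.
\item If $V$ is anisotropic, \eqref{eq: weightisom} together with the Hochschild--Serre remark following it shows that the kernel of $\weight$ on $\CH^r(\Gamma_h\backslash Q_V)^+_{\ratq}$ is torsion. The same then holds for its quotient $\CH^r(\Gamma_h\backslash Q_V)_{\ratq}$.
\end{enumerate}

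For the $+$-version with $k=1$, we have $\cC^1_{\ratq}{}^+=\cC^1_{\ratq}$, so the two Chow groups coincide and nothing extra is needed. For $k=r$ the $+$-version was handled in the anisotropic subcase above, and the non-$+$ version follows because it is a quotient of the $+$-version through which $\weight$ factors.

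The main obstacle I expect is the reduction from general arithmetic $\Gamma_h$ to a neat normal subgroup $\Gamma'$ of finite index, which the earlier statements only sketch. For $k=r$, one would use restriction–corestriction: $\mathrm{cores}\circ\mathrm{res}$ is multiplication by $[\Gamma_h:\Gamma']$ on both $\cZ$ and $\HH^{ks}(\cdot,\Z)$, and it is compatible with $\div$ and $\weight$. Consequently, for a class $x$ in $\Ker(\weight)$, its restriction lies in the kernel for $\Gamma'$ and is therefore $0$, and $[\Gamma_h:\Gamma']\,x$ is a divisor. I would write this out carefully for $k=r$, including the $+$-version. Finally, I would note that cuspidality (as opposed to mere modularity) is not obtained by this argument: only the full Siegel modular form statement transfers from Theorem \ref{thm: KM}, so the cusp form claim for odd $s$ would need to be weakened to "Siegel modular form" unless additional input is available.
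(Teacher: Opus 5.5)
Your proposal is correct and follows the paper's own argument. In the cases $k=1$ (Corollary~\ref{cor: trivial}) and $k=r$ (the isomorphism \eqref{eq: weightisom} plus the restriction--corestriction step, or torsion-ness of $\cZ^r(\Gamma_h\backslash Q_V)_{\ratq}$ when $V$ is isotropic), the weight map has torsion kernel, so every $\varphi$ to $\C$ factors through $\weight$ and $\varphi(\Theta^{\ratq}_h)=\psi(\Theta^{\KM}_h)$ by \eqref{eq: comp classes} and Theorem~\ref{thm: KM}.

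Your caveat on cuspidality reflects the statement, not a defect in your argument. The paper's one-line proof also transports only what Theorem~\ref{thm: KM} asserts for $s$ odd, namely that the series over $\beta>0$ is itself a full Siegel modular form (all singular coefficients vanish). Any stronger property of $\psi(\Theta^{\KM}_h)$ would pass verbatim through the same identity.
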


\begin{question}
Let $1\leq k \leq r-2$ and $h\in \DD_L^k$.
Is the generating series $\Theta^{\ratq}_h$ a Siegel modular form of genus $k$ and weight $n/2$?
In case $V$ is anisotropic, is the same true for $\Theta^{\ratq,+}_h?$
\end{question}
Note that the case $n=3$ is covered completely by Theorem \ref{thm: ratq}.
In dimension $4$, only the case of signature $(3,1)$ and $k=2$ remains open.

%%%%%%%%%%%%%%%%%%%%%%%%%%%%%%%%%%%%%%%%%%%%%%%%%%%%%%
% BIBLIOGRAPHY
%%%%%%%%%%%%%%%%%%%%%%%%%%%%%%%%%%%%%%%%%%%%%%%%%%%%%%

\bibliography{bibfile}
\bibliographystyle{alpha}

\end{document}